\DeclareMathAlphabet{\mathcalligra}{T1}{calligra}{m}{n}
\theoremstyle{plain}
\newtheorem{theorem}{Theorem}
\newtheorem{lemma}{Lemma}
\newtheorem{proposition}{Proposition}
\newtheorem{corollary}{Corollary}
\theoremstyle{remark}
\newtheorem{assumption}{Assumption}
\newtheorem{remark}{Remark}
\newcommand{\ind}{\stackrel{\text{ind}}{\sim}}
\def\generalprob#1{\ {\mathchoice{\raise-1.5pt\hbox{$\buildrel#1\over\ra$}}
		{\raise-2pt\hbox{$\buildrel#1\over\ra$}}{}{}}\ }
\def\l{\lambda}
\newcommand{\given}{\mid}
\newcommand\NN{\mathbb{N}}
\newcommand\g{\gamma}
\newcommand\e{\epsilon}
\newcommand\h{\eta}
\newcommand\expt{\E}
\DeclareMathOperator{\E}{E}
\newcommand\trans{^{\intercal}}
\def\t{\tau}
\def\q{\theta}
\def\d{\delta}
\def\O{\mathcal{O}}
\def\bi{i}
\def\mcalu{{u}}
\def\mcall{{l}}
\def\weak{\rightsquigarrow}
\newcommand\RR{\mathbb{R}}
\newcommand\iid{\stackrel{\text{i.i.d.}}{\sim}}
\newcommand\ra{\rightarrow}
\newcommand\tr{\mathop{\rm tr}\nolimits}
\newcommand\argmin{\mathop{\rm arg min}}
\def\vec{\mathop{\text{vec}}\nolimits}
\newcommand\var{\mathop{\mathrm{Var}}\nolimits}
\newcommand\Cov{\mathop{\mathrm{Cov}}\nolimits}
\newcommand{\N}{\ensuremath{\mathbb{N}}}
\newcommand{\m}{\ensuremath{\mu}}
\begin{document}

\begin{frontmatter}
\title{Misspecified Bernstein-Von Mises theorem for hierarchical models}
\runtitle{Hierarchical misspecified Bernstein-Von Mises}

\begin{aug}
\author[A]{\inits{G.}\fnms{Geerten}~\snm{Koers}\ead[label=e1]{g.j.koers@tudelft.nl}}
\author[B]{\inits{B.}\fnms{Botond}~\snm{Szabó}\ead[label=e2]{botond.szabo@unibocconi.it}}
\author[A]{\inits{A}\fnms{Aad}~\snm{van der Vaart}\ead[label=e3]{a.w.vandervaart@tudelft.nl}}
\address[A]{DIAM, Delft University of Technology, Mekelweg 4, 2628 CD, Delft, Netherlands, \printead[presep={,\ }]{e1,e3}}

\address[B]{Department of Data Sciences and BIDSA, Bocconi University, Via Roentgen 1 20136, Milan, Italy, \printead{e2}}
\end{aug}

\begin{abstract}
We derive a Bernstein Von-Mises theorem in the context of misspecified, non-i.i.d., hierarchical models parametrised by a finite-dimensional parameter of interest.
We apply our results to hierarchical models containing non-linear operators, including the squared integral operator, and PDE-constrained inverse problems.
More specifically, we consider the elliptic, time-independent Schrödinger equation with parametric boundary condition and general parabolic PDEs with parametric potential and boundary constraints.
Our theoretical results are complemented with a numerical analysis of synthetic data sets, considering both the square integral operator and the Schrödinger equation. 
\end{abstract}

\begin{keyword}
\kwd{Bayesian estimation}
\kwd{Bernstein-Von Mises}
\kwd{hierarchical model}
\kwd{misspecification}
\kwd{parametric model}
\kwd{posterior distribution}
\end{keyword}

\end{frontmatter}


\section{Introduction}\label{sec:Introduction}
Hierarchical models are widely used across various fields, such as astronomy (weak lensing \cite{alsing2016hierarchical,sellentin2018skewed}, cosmic microwave background temperature maps \cite{eriksen2004power}, gravitational waves \cite{cornish2015bayeswave}), statistical physics (the Dyson hierarchical model \cite{monthus2011critical}), environmental sciences (predicting the spread of ecological processes \cite{wikle2003:eco}) and medicine (spatial modelling of fMRI data \cite{bowman2008bayesian}). These models define a data-generating process in multiple layers, offering great flexibility in modeling. The connections between the layers are often non-linear and governed by partial differential equations (PDEs), which introduces significant analytical and computational challenges.

The objective is to infer aspects of the hierarchical process. Directly applying exact methods for inference is usually computationally prohibitive or infeasible. Therefore, surrogate or approximate likelihoods are often used instead of the exact likelihood of the multilayer data-generating structure. Misspecified Gaussian likelihoods are popular for their computational convenience and ease of interpretation. However, this simplification results in information loss and may lead to inaccurate error estimates. Despite the known inaccuracies of these misspecified methods, they are often used without rigorous theoretical guarantees, sometimes leading to contradictory results.

This article was motivated by a specific example from astronomy. Cosmic microwave background radiation, a snapshot of the universe approximately $3\times 10^5$  years after the Big Bang, is modeled as a Gaussian random field (although its Gaussianity is debated \cite{marinucci2004testing}). Present-day observations are non-linear transformations of this field, described by complex systems of PDEs, corrupted with noise. Due to the non-linearity, the observed data are non-Gaussian. The goal is to recover key parametric aspects of the non-linear operators (and their surrogates), including the proportions of dark energy and dark matter, and the Hubble constant (the universe's expansion rate). The article \cite{ezquiaga2018dark} reviews tools for understanding dark energy using gravitational waves. To simplify modeling and computations, the complex hierarchical model is replaced by a Gaussian approximation. Figure~8  of the article shows that the error bars of the proposed estimators do not intersect, leading to contradictory conclusions about the universe's expansion rate. Understanding the theoretical properties of misspecified hierarchical approaches is crucial to determine whether these non-overlapping confidence intervals result from using different models or statistical error. 

In this article we consider Bayesian methods for inferring parametric aspects of the hierarchical model. Bayesian methods are popular across many fields due to their built-in uncertainty quantification and their natural way of incorporating prior knowledge. Our focus is on the theoretical, asymptotic properties of the posterior distribution of the parameter of interest. For correctly specified, regular parametric models, the Bernstein-Von Mises theorem shows that the posterior distribution is asymptotically Gaussian, centred at the maximum likelihood (or another efficient) estimator, and has variance equal to the Cram\'er-Rao bound. Efficiency and accuracy of the uncertainty quantification can be derived from this result (\cite{doob1949application,le2012asymptotic, VanDerVaart1998}). Extensions to semi-parametric and non-parametric models were considered in \cite{Castillo2012,castillo2015bernstein,Leahu:2011,CastilloNickl2014}, while a more accurate, skewed version of the theorem was derived in \cite{skewed:BvM}.

As noted, the hierarchical models that motivate the present article are typically misspecified. For this situation
 Bernstein-Von Mises type results have been established for various models, see \cite{Kleijn12} and the review in \cite{review:misspecified}. 
However, none of these results specifically address hierarchical models, and the conditions do not apply to our non-linear examples. Thus, we first derive a new, modified version of the classical misspecified theorem. Next, we apply this to a range of hierarchical models, starting with the square integral operator as a toy example and next turning to partial differential equation (PDE) constrained inverse problems. We first examine the time-independent Schrödinger equation, which allows tractable computations and serves as a foundation for exploring more complex PDE-constrained inverse problems. We then derive asymptotic guarantees for general parabolic PDEs with parametric potential and boundary constraints, considering observations at different time points to mimic typical non-i.i.d.\ observational structures in physics and astronomy.

The article is organised as follows.
In Section~\ref{sec:Description of the problem} we introduce the hierarchical observational model, its Gaussian surrogate and the misspecified Bayesian framework.
In Section~\ref{sec:BvM} we provide our general misspecified Bernstein-Von Mises theorem and in Section~\ref{sec:Applications} we apply it to hierarchical models.
Our main contribution is to cover several interesting, PDE-constrained inverse problems, and is presented in Section~\ref{sec:examples}.
In Section~\ref{sec:Simulations} we investigate the numerical behaviour of the limiting misspecified posterior in contrast to the well-specified case and the accuracy of the Gaussian approximation.
The proofs of the general results, examples, and additional technical lemmas are deferred to Sections~\ref{sec:Proofs},~\ref{sec:proof:examples},~\ref{sec:Technical Lemmas}, and~\ref{sec:proof:BvM:direct}.

\section{Description of the problem}\label{sec:Description of the problem}
We consider non-linear, hierarchical models of the form
\begin{equation}
\begin{aligned}\label{eq: model}
f_i &\iid G, \qquad\qquad &i = 1,\ldots, N,\\
X_i \given f_i, \q &\ind N_p(T_{\q}^{\bi}(f_i), \Lambda^{\bi}), \qquad &i = 1, \ldots, N.
\end{aligned}
\end{equation}
Here $G$ is a probability distribution on some function space $\mathcal{F}$ equipped with its Borel sets,
$T_{\q}^{\bi}:\mathcal{F}\to \RR^p$ are linear or non-linear operators, indexed by an unknown finite-dimensional parameter 
$\q\in\RR^d$, and $\Lambda^{\bi}\in \RR^{p\times p}$ are known, positive-definite covariance matrices.
We denote the (marginal, not conditional on $f_i$) distribution of $X_i$ by $P_{\q,i}$, for $i = 1,\ldots, N$, 
with density $p_{\q,i}$,  and denote the corresponding expectation and covariance matrix by
\begin{equation}
\label{EqMeanCovarianceModel}
\mu_{\q}^{\bi} := \expt_{\q,i} X_i,\qquad\qquad\Sigma_{\q}^{\bi}:= \Cov_{\q, i}(X_i),\qquad i = 1,\ldots, N.
\end{equation}
The objective is to make inference on the unknown model parameter $\q\in\Theta\subset\RR^d$.
In case the operators $T_{\q}^{\bi}$ are the same for all $i=1,\ldots, N$, we arrive at an i.i.d.\ model for $(X_i)_{i = 1, \ldots, N}$.

The first layer in \eqref{eq: model} models a random, non-parametric, unobserved functional parameter $f_i$.
The second layer describes the actual observations $X_1,\ldots, X_N$, which depend on a (possibly) non-linear transformation of the functional parameter, 
and are corrupted with Gaussian noise. Their distributions $P_{\q,i}$ are mixtures of multivariate-normal distributions.
In Section~\ref{sec:examples} we consider several specific examples of this form, including elliptic and parabolic PDE-constrained inverse problems.
In applications the operators $T_{\q}^{\bi}$ can be complex (see the examples in the introduction), but the main goal is typically the same: to recover certain parametric aspects of the model/operator.

In practice, simplified, approximate models are used to speed up computations and increase the interpretability of the model.
In particular, Gaussian approximations are common. In our analysis we replace the mixture likelihood $P_{\q,i}$ of $X_i$ by a Gaussian approximation 
with mean and covariance matching the corresponding quantities in the original model, given by
\begin{equation}
\label{EqMisspecifiedQ}
Q_{\q,i}=N_p(\mu_{\q}^{\bi},\Sigma_{\q}^{\bi}).
\end{equation}
We assume that the mean and covariance functions \eqref{EqMeanCovarianceModel} can be computed,
or at least can be suitably approximated. This may itself be computationally intensive, but less forbidding and more 
accessible to modelling than computing the full likelihood function. 

Using a misspecified model results in a  loss of information and may invalidate the usual methods of uncertainty quantification.
Below we investigate these aspects for the Bayesian approach, in a frequentist setting, where we assume that the data are in
reality generated according to the hierarchical model \eqref{eq: model}.

Given a prior density $\pi$ on the parameter set $\Theta$, the misspecified posterior distribution is given by, for $q_{\q,i}$ a density of $Q_{\q,i}$,
\begin{align}\label{def:posterior}
\Pi(\q \in \cdot \given X_1, \ldots, X_N) = \frac{ \int_\cdot \prod_{i=1}^N q_{\q, i}(X_i)\pi(\q) \,d\q}{\int_{\Theta} \prod_{i=1}^N q_{\q, i}(X_i) \pi(\q) \,d\q}.
\end{align}
We derive a Bernstein-Von Mises type result, which shows that these random measures approximate 
to a Gaussian distribution as $N\ra\infty$.
Under mild assumptions these distributions contract around the parameter $\q_N^* \in \Theta$ 
that minimizes the Kullback-Leibler divergence between the misspecified Gaussian class and the original model, i.e.
\begin{equation*}
\q_N^*=\argmin_{\q\in\Theta } P_{\q_{0}}^{(N)}\biggl( \sum_{i=1}^N \log \frac{p_{\q_{0},i}(X_i)}{q_{\q, i}(X_i)}\biggr).
\end{equation*}
We show that in our setup $\q_N^*$ coincides with the parameter of interest $\q_{0}$, 
derive the covariance matrix of the limiting Gaussian distribution, and
investigate how this differs from the well-specified posterior distribution, 
both theoretically and numerically for various synthetic data sets.

\section{Misspecified Bernstein-Von Mises theorem}\label{sec:BvM}
The asymptotic behavior of the posterior distribution in regular parametric models is described by the well-known Bernstein-Von Mises theorem. In our case, we construct the posterior distribution using the likelihood of the Gaussian model \eqref{EqMisspecifiedQ} as a surrogate for the hierarchical model \eqref{eq: model}.

In this section, we first derive a Bernstein-Von Mises theorem for misspecified models that accommodates this scenario. The proposition is stated in general terms, extending beyond our specific hierarchical model \eqref{eq: model}.

\subsection{A general Bernstein-Von Mises theorem}
Misspecified Bernstein-Von Mises results were derived in the literature before, 
see for instance \cite{Kleijn12}, but under conditions too strong for our situation.
In particular, to obtain a $\sqrt{N}$-contraction rate, the aforementioned paper assumes that the log misspecified likelihood is locally Lipschitz with a majoring function that has exponential moments, which is violated for examples of our model \eqref{eq: model}, as demonstrated in Remark~\ref{remark:counter} in Section~\ref{sec:examples} below.
In this section, we present a theorem that is appropriate for the hierarchical model \eqref{eq: model} with a selection of operators $T_{\q}^i$.

The conditions of the theorem are essentially the classical ones given in Theorem~7.1 in \cite{Lehmann83} 
(given there for the well-specified case, i.i.d.\ data and a one-dimensional parameter), 
but are at a level of abstraction that they apply to general models, also beyond independent observations.

Let $l^{(N)}(\q)$ and $\nabla l^{(N)}(\q)$ denote a possibly misspecified log-likelihood and its gradient, of a model for an observation $X^{(N)}$.
Given $\q_N^*\in\RR^d$ and symmetric nonnegative-definite $d\times d$ matrices $V_{\q_N^*, N}$, define, for $\q\in\Theta$, symmetric matrices $R_N(\q)$ through
\begin{align}\label{eq:definition Rn}
l^{(N)}(\q) &= l^{(N)}(\q_N^*) + \nabla l^{(N)}(\q_N^*)\trans (\q - \q_N^*) \\
&\qquad\qquad- \frac{1}{2}N(\q - \q_N^*)\trans \bigl[V_{\q_N^*,N} + \frac1NR_N(\q)\bigr](\q - \q_N^*). \nonumber
\end{align}
Let $P_0^{(N)}$ denote the ``true'' distribution of $X^{(N)}$, which needs to have no structural relation to the likelihood \eqref{eq:definition Rn}. 
The misspecified posterior distribution based on the likelihood  \eqref{eq:definition Rn} will be investigated under $P_0^{(N)}$. In this investigation the vector 
$\q_N^*$ will typically minimise the map $\q\mapsto P_{0}^{(N)}l^{(N)}(\q)$,
and $V_{\q_N^*, N}$ will typically be the Hessian of the map $\q \mapsto -N^{-1} P_{0}^{(N)} l^{(N)}(\q)$ at the point $\q = \q_N^*$, but this is not necessary
and only the following assumptions are imposed. 

Since the dimension of $\q$ is fixed, the norm $\|\cdot\|$ in the following may be any vector or matrix norm.

\begin{assumption}\label{assum:BvM theorem}\hfill
\begin{enumerate}[label=\textbf{\ref*{assum:BvM theorem}.\arabic*},ref={\ref*{assum:BvM theorem}.\arabic*}]
\item \label{assum: convergence thetan} 
The sequence $\q_N^*$ tends to an interior point $\q^*$ of \space$\Theta \subset \RR^d$.
\item \label{assum: posit def V} $\|V_{\q_N^*, N} - V_{*}\| \ra 0$ for a positive-definite $d\times d$ matrix $V_{*}$ (typically dependent on $\theta^*$).
\item \label{assum: regular likel} $\frac{1}{\sqrt{N}}\nabla l^{(N)}(\q_N^*)=O_P(1)$ in $P_{0}^{(N)}$-probability as $n\to\infty$.
\item \label{assum:likel max} For any $\delta > 0$, there exists an $\epsilon > 0$ such that
\begin{align*}
\lim_{N\to\infty} P_{0}^{(N)} \biggl( \sup_{\q:\|\q - \q_N^*\|\geq \delta} \frac{1}{N}\bigl( l^{(N)}(\q) - l^{(N)}(\q_N^*)\bigr) \leq -\epsilon\biggr) = 1.
\end{align*}
\item \label{assum:Rn conv} Given any $\epsilon > 0$, there exists a $\delta > 0$ such that
\begin{align*}
\lim_{N\to\infty} P_{0}^{(N)}\biggl( \sup_{\q:\|\q - \q_N^*\| \leq \delta} \frac{1}{N}\bigl\| R_N(\q)\bigr\| \geq \epsilon \biggr) = 0.
\end{align*}
\item \label{assum:prior cont} The prior has a density $\pi$ that is continuous and positive at $\q=\q^*$.
\item \label{assum:prior exp} The prior has a finite $k$th moment: $\int_{\Theta}\|\q\|^k \pi(\q)\, d\q < \infty$.
\end{enumerate}
\end{assumption}

\begin{proposition}\label{thm:BvM direct proof}
Let $t\mapsto \pi_N(t \given  X^{(N)})$ be the density of $\sqrt{N}(\vartheta - T_N)$ given $X^{(N)}$ if $\vartheta$ follows the
 distribution with density proportional to $\q\mapsto \exp\bigl(l^{(N)}(\q)\bigr)\,\pi(\q)$, where
\begin{align}\label{def:Tn}
T_N = \q_N^* + \frac{1}{N} V_{\q_N^*, N}^{-1} \nabla l^{(N)}(\q_N^*).
\end{align}
Under Assumptions~\ref{assum: convergence thetan}--\ref{assum:prior cont},
\begin{align}\label{eq:BvM-convergence}
\int\limits_{\sqrt{N}(\Theta-T_N)}\!\!\!\!\!\!\! \bigl| \pi_N(t \given X^{(N)}) - \varphi_{0,V_{*}^{-1}}(t) \bigr|\, dt \stackrel{P_{0}^{(N)}}{\to} 0,
\end{align}
where $\varphi_{\mu,\Sigma}$ denotes the density of the $d$-dimensional normal distribution with mean vector $\mu\in\RR^d$ and covariance matrix $\Sigma\in\RR^{d\times d}$.
If in addition Assumption~\ref{assum:prior exp} holds, then also
\begin{align}\label{eq:BvM-convergence exp}
\int\limits_{\sqrt{N}(\Theta-T_N)}\!\!\!\!\!\!\! \bigl(1+\|t\|^k\bigr)\,\bigl| \pi_N(t \given X^{(N)}) - \varphi_{0,V_{*}^{-1}}(t) \bigr|\, dt \stackrel{P_{0}^{(N)}}{\to} 0.
\end{align}
\end{proposition}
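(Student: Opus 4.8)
The plan is to adapt the classical Bernstein--von Mises argument (cf.\ Theorem~7.1 in \cite{Lehmann83}) to this abstract, misspecified setting. Write $g_N:=N^{-1/2}\nabla l^{(N)}(\theta_N^*)$, which is $O_P(1)$ by Assumption~\ref{assum: regular likel}, abbreviate $V_N:=V_{\theta_N^*,N}$, and pass to the local coordinate $t=\sqrt N(\theta-T_N)$, i.e.\ $\theta=\theta_t:=T_N+t/\sqrt N$. Substituting into the exact expansion~\eqref{eq:definition Rn} and completing the square in $h:=\sqrt N(\theta-\theta_N^*)=t+V_N^{-1}g_N$ yields, for every $\theta\in\Theta$,
\[
 l^{(N)}(\theta_t)-l^{(N)}(\theta_N^*)=\tfrac12 g_N\trans V_N^{-1}g_N-\tfrac12 t\trans V_N t-\tfrac1{2N}h\trans R_N(\theta_t)h ,
\]
using $T_N=\theta_N^*+N^{-1/2}V_N^{-1}g_N$. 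Hence the unnormalised posterior density of $t$ equals the $t$-free (random) factor $\exp\bigl(l^{(N)}(\theta_N^*)+\tfrac12 g_N\trans V_N^{-1}g_N\bigr)$ times
\[
\phi_N(t):=\exp\bigl(-\tfrac12 t\trans V_N t-\tfrac1{2N}h\trans R_N(\theta_t)h\bigr)\,\pi(\theta_t)\,\mathbf 1\{t\in\sqrt N(\Theta-T_N)\},
\]
and the first factor cancels, so $\pi_N(t\given X_N)=\phi_N(t)\big/\!\int\phi_N(s)\,ds$. It therefore suffices to prove $\int\bigl|\phi_N(t)-\phi(t)\bigr|\,dt\to 0$ in $P_0^{(N)}$-probability, with $\phi(t):=\pi(\theta^*)\exp(-\tfrac12 t\trans V_{\theta^*}t)$.

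For this I would split the integral over $\{\|t\|\le M\}$, $\{M<\|t\|\le\delta\sqrt N\}$ and $\{\|t\|>\delta\sqrt N\}$, choosing $\delta>0$ small enough (via Assumption~\ref{assum:Rn conv} with $\eps=\tfrac14\lambda_{\min}(V_{\theta^*})$) that $N^{-1}\|R_N(\theta)\|\le\tfrac14\lambda_{\min}(V_{\theta^*})$ on $\{\|\theta-\theta_N^*\|\le 2\delta\}$ with probability tending to one, and so small that $B(\theta^*,2\delta)\subset\Theta$ and $\pi$ is bounded there (Assumption~\ref{assum:prior cont}). On $\{\|t\|>\delta\sqrt N\}$ one has $\|\theta_t-\theta_N^*\|\ge\delta/2$ eventually, since $\|T_N-\theta_N^*\|=O_P(N^{-1/2})$; Assumption~\ref{assum:likel max} then gives, on a high-probability event, $\int_{\|t\|>\delta\sqrt N}\phi_N\le e^{-N\eps}\!\int_{\sqrt N(\Theta-T_N)}\!\pi(\theta_t)\,dt=e^{-N\eps}N^{d/2}\to0$. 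On $\{\|t\|\le\delta\sqrt N\}$ the choice of $\delta$ forces the quadratic exponent to dominate $\tfrac18\lambda_{\min}(V_{\theta^*})\|h\|^2$, so (using $g_N=O_P(1)$ to bound $\|V_N^{-1}g_N\|$) there is a fixed integrable majorant $\phi_N(t)\lesssim e^{-c\|t\|^2}$ valid on a high-probability event; combined with the pointwise convergence $\phi_N(t)\to\phi(t)$ (from $\theta_t\to\theta^*$ via Assumption~\ref{assum: convergence thetan}, together with Assumptions~\ref{assum: posit def V},~\ref{assum:Rn conv},~\ref{assum:prior cont}) and dominated convergence this handles $\{\|t\|\le M\}$, while the residual $\{M<\|t\|\}$ contributions of both the majorant and $\phi$ are made arbitrarily small by taking $M$ large first.

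The denominator $c_N:=\int\phi_N(s)\,ds$ then converges in probability to $c:=\pi(\theta^*)(2\pi)^{d/2}(\det V_{\theta^*})^{-1/2}>0$ (the same $L^1$ convergence, plus $\pi(\theta^*)>0$), whence $\pi_N(\cdot\given X_N)=\phi_N/c_N\to\phi/c=\varphi_{0,V_{\theta^*}^{-1}}$ in $L^1$ by the elementary bound $\int|\phi_N/c_N-\phi/c|\le c_N^{-1}\int|\phi_N-\phi|+|c_N^{-1}-c^{-1}|\,c$; this is~\eqref{eq:BvM-convergence}. For~\eqref{eq:BvM-convergence exp} I would run the same three-region decomposition with the weight $1+\|t\|$ inserted: on the two local regions the Gaussian-type majorant absorbs the extra factor, while on $\{\|t\|>\delta\sqrt N\}$ the change of variables gives $\int(1+\|t\|)\phi_N\le e^{-N\eps}N^{d/2}\bigl(1+\sqrt N(\int_\Theta\|\theta\|\pi(\theta)\,d\theta+\|T_N\|)\bigr)\to0$, which is exactly where Assumption~\ref{assum:prior exp} enters.

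The main obstacle is the middle region: producing a single deterministic, integrable majorant for $\phi_N$ over the \emph{growing} set $\{\|t\|\le\delta\sqrt N\}$, and correctly interlacing the several $O_P/o_P$ statements (for $g_N$, for $T_N-\theta_N^*$, for $\sup\|R_N\|$ on $\{\|\theta-\theta_N^*\|\le 2\delta\}$, and for $V_N-V_{\theta^*}$) so that all the required inequalities hold simultaneously. The cleanest bookkeeping is to fix once and for all a single event of probability tending to one on which every estimate above is carried out, and then argue deterministically on that event.
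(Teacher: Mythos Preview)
Your proposal is correct and follows essentially the same approach as the paper's proof: the paper also rewrites the centred posterior density as $\pi(T_N+t/\sqrt N)\,e^{\omega_N(t)}/C_N$ with $\omega_N(t)=-\tfrac12 t\trans V_N t-\tfrac1{2N}(t+Z_N)\trans R_N(\theta_t)(t+Z_N)$ (your $\phi_N$ and $h=t+Z_N$ exactly), proves the $L^1$ convergence of the unnormalised density to $\pi(\theta^*)e^{-\frac12 t\trans V_{\theta^*}t}$ via the identical three-region decomposition $\{\|t\|\le M\}\cup\{M<\|t\|<\eta\sqrt N\}\cup\{\|t\|\ge\eta\sqrt N\}$, and then concludes by the same triangle-inequality argument on $C_N$. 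Your handling of the far region, the Gaussian majorant on the middle region, and the role of Assumption~\ref{assum:prior exp} in the weighted version all match the paper's argument.
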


The proof of the proposition is deferred to Section~\ref{sec:proof:BvM:direct}.

By the invariance of the total variation distance under measurable bijections, including affine transformations, the convergence \eqref{eq:BvM-convergence} implies that
the misspecified posterior distribution of $\vartheta$ approximates to a normal distribution with centre $T_N$ and covariance matrix $N^{-1}V_{*}^{-1}$.
However, in contrast to the well-specified case, the covariance matrix $V_{*}^{-1}$ does not necessarily match the limiting covariance matrix of 
the sequence $\sqrt N(T_N-\q_N^*)$. If Assumption~\ref{assum: regular likel}  is strengthened to the convergence in distribution 
$\frac{1}{\sqrt{N}}\nabla l^{(N)}(\q_N^*) \weak N_d(0, J_*)$, 
then the sequence $\sqrt N(T_N-\q_N^*)$ is asymptotically normal with mean zero and
covariance matrix $V_*^{-1}J_*V_*^{-1}$ of the ``sandwich'' form. The marginal variances $a^TV_{*}^{-1}a$, for $a\in\RR^d$, can be larger, equal, or smaller than the 
corresponding marginal variances of the latter matrix, see \cite{Kleijn12}. As a consequence,
credible sets resulting from the misspecified posterior distribution can be both overconfident or conservative, 
depending on the true distribution $P_{0}^{(N)}$, and can deviate from the credible sets resulting from a correctly specified model.

The Gaussian approximation to the posterior distribution obtainable from \eqref{eq:BvM-convergence} 
depends on the unknown parameter $\q_N^*$ through the centring at $T_N$ and the covariance matrix $V_*$. Because by \eqref{eq:BvM-convergence exp}
the mean and covariance matrix of the misspecified posterior distribution also converge (if $k\ge 2$), it follows that also the normal distribution
with these two data-driven parameters approximate to the misspecified posterior distribution.
Under regularity conditions (on the misspecified likelihood), the centring $T_N$ could also be replaced by 
the misspecified maximum likelihood estimator, the point of maximum of $\q\mapsto l^{(N)}(\q)$.

\subsection{Bernstein-Von Mises in hierarchical models}\label{sec:Applications}
In this section, we apply Proposition~\ref{thm:BvM direct proof} in the context of the hierarchical data-generating model \eqref{eq: model} and the misspecified surrogate Gaussian likelihood \eqref{EqMisspecifiedQ}.
Given observations $X_1,\ldots, X_N$, we form the posterior distribution in Equation~\eqref{def:posterior} with the $q_{\q, i}$ equal to the density of the normal distribution with the correctly specified means and covariance matrices, as in Equation~\eqref{EqMeanCovarianceModel}.
The true distribution $P_0^{(N)}$ is given by the hierarchical model specified by \eqref{eq: model}.

The precision matrix $V_{*}\in\RR^{d\times d}$ of the limiting Gaussian distribution takes a particular form.
The proof of the following lemma is deferred to Section~\ref{sec:prof:lem:variance}.

\begin{lemma}\label{thm:locMinKLDiv}
Consider the hierarchical model \eqref{eq: model} and suppose that 
$(\mu_{\q}^{\bi}, \Sigma_{\q}^{\bi}) \neq (\mu_{\q_{0}}^{\bi}, \Sigma_{\q_{0}}^{\bi})$, for all $\q \neq \q_{0}$.
Then the Kullback-Leiber divergence $\q \mapsto P_{\q_{0},i} \log(p_{\q_{0},i}/q_{\q,i})$ has a unique minimum at $\q = \q_{0}$.
Assume furthermore that the maps $\q\mapsto \mu_{\q}^{\bi}$ and $\q\mapsto\Sigma_\q^{\bi}$ are
twice continuously differentiable in a neighbourhood around $\q_{0}$,  and that $\Sigma_{\q_{0}}^{\bi}$ is invertible.
Then, with $v_l^{\bi} := (\Sigma_{\q_{0}}^{\bi})^{-1/2} \frac{d}{d\q_l} \mu_{\q}^{\bi}|_{\q = \q_{0}}$ and $A_l^{\bi} := (\Sigma_{\q_{0}}^{\bi})^{-1} \frac{d}{d\q_l} \Sigma_{\q}^{\bi}|_{\q=\q_{0}}$, $l=1,\ldots,d$, the Hessian at $\q=\q_{0}$ 
of the Kullback-Leibler divergence is given by the positive semi-definite matrix
\begin{align}\label{eq:Vstar}
V_{\q_{0}}^{\bi} = \frac{1}{2}
\begin{pmatrix}
\tr(A_1^{\bi} A_1^{\bi}) & \cdots & \tr(A_1^{\bi} A_d^{\bi})\\
\vdots & \ddots & \vdots\\
\tr(A_d^{\bi} A_1^{\bi}) & \cdots & \tr(A_d^{\bi} A_d^{\bi})
\end{pmatrix}
+ 
\begin{pmatrix}
(v_1^{\bi})\trans v_1^{\bi} & \cdots & (v_1^{\bi})\trans v_d^{\bi}\\
\vdots& \ddots & \vdots\\
(v_d^{\bi})\trans v_1^{\bi} & \cdots & (v_d^{\bi})\trans v_d^{\bi}
\end{pmatrix}.
\end{align}
\end{lemma}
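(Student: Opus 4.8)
The plan is to split the statement into two independent pieces. First, I would identify the Kullback--Leibler minimizer by a ``no misspecification at the level of the first two moments'' argument, and then compute the Hessian directly from the closed form of the Gaussian Kullback--Leibler divergence.

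\emph{Identification of $\theta_N^*$.} Write $\log\bigl(p_{\theta_{0},i}/q_{\theta,i}\bigr)=\log\bigl(p_{\theta_{0},i}/q_{\theta_{0},i}\bigr)+\log\bigl(q_{\theta_{0},i}/q_{\theta,i}\bigr)$ and take the $P_{\theta_{0},i}$-expectation; the first summand does not depend on $\theta$, so only $\theta\mapsto P_{\theta_{0},i}\log(q_{\theta_{0},i}/q_{\theta,i})$ matters. The map $x\mapsto\log\bigl(q_{\theta_{0},i}(x)/q_{\theta,i}(x)\bigr)$ is a polynomial of degree at most two in $x$, since both $q_{\theta_{0},i}$ and $q_{\theta,i}$ are Gaussian densities, so its expectation under a distribution depends on that distribution only through its mean vector and covariance matrix. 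By \eqref{EqMeanCovarianceModel} the true law $P_{\theta_{0},i}$ has mean $\mu_{\theta_{0}}^{(i)}$ and covariance $\Sigma_{\theta_{0}}^{(i)}$, which are exactly those of $Q_{\theta_{0},i}$, whence
\[
  P_{\theta_{0},i}\log\frac{q_{\theta_{0},i}}{q_{\theta,i}}
  = Q_{\theta_{0},i}\log\frac{q_{\theta_{0},i}}{q_{\theta,i}}
  = \mathrm{KL}\bigl(Q_{\theta_{0},i}\,\|\,Q_{\theta,i}\bigr)\ \ge\ 0 ,
\]
with equality iff $Q_{\theta,i}=Q_{\theta_{0},i}$, i.e.\ $(\mu_{\theta}^{(i)},\Sigma_{\theta}^{(i)})=(\mu_{\theta_{0}}^{(i)},\Sigma_{\theta_{0}}^{(i)})$, which under the stated separation hypothesis holds only at $\theta=\theta_{0}$. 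Thus $\theta\mapsto P_{\theta_{0},i}\log(p_{\theta_{0},i}/q_{\theta,i})$ has a unique minimizer at $\theta_{0}$; since $P_{\theta_{0}}^{(N)}\sum_{i}\log(p_{\theta_{0},i}/q_{\theta,i})=\sum_{i}P_{\theta_{0},i}\log(p_{\theta_{0},i}/q_{\theta,i})$ by independence, summing gives $\theta_N^*=\theta_{0}$.

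\emph{Computation of the Hessian.} By the previous step the $\theta$-dependent part of $\theta\mapsto P_{\theta_{0},i}\log(p_{\theta_{0},i}/q_{\theta,i})$ equals the Gaussian Kullback--Leibler divergence
\[
  g(\theta)=\tfrac12\Bigl[\log\det\Sigma_{\theta}^{(i)}-\log\det\Sigma_{\theta_{0}}^{(i)}-p
  +\tr\bigl((\Sigma_{\theta}^{(i)})^{-1}\Sigma_{\theta_{0}}^{(i)}\bigr)
  +\bigl(\mu_{\theta}^{(i)}-\mu_{\theta_{0}}^{(i)}\bigr)\trans(\Sigma_{\theta}^{(i)})^{-1}\bigl(\mu_{\theta}^{(i)}-\mu_{\theta_{0}}^{(i)}\bigr)\Bigr] ,
\]
finite and twice differentiable near $\theta_{0}$ because $\mu,\Sigma$ are and $\Sigma_{\theta_{0}}^{(i)}$ is invertible. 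For the mean term, set $m(\theta):=\mu_{\theta}^{(i)}-\mu_{\theta_{0}}^{(i)}$ with $m(\theta_{0})=0$; differentiating $\tfrac12\,m\trans(\Sigma_{\theta}^{(i)})^{-1}m$ twice and evaluating at $\theta_{0}$ annihilates every term still carrying a factor of $m$, leaving $(\partial_l m)\trans(\Sigma_{\theta_{0}}^{(i)})^{-1}(\partial_k m)=(v_l^{(i)})\trans v_k^{(i)}$. For the covariance term I would substitute $W(\theta):=(\Sigma_{\theta_{0}}^{(i)})^{-1/2}\Sigma_{\theta}^{(i)}(\Sigma_{\theta_{0}}^{(i)})^{-1/2}$, so $W(\theta_{0})=I$ and $\partial_l W|_{\theta_{0}}=(\Sigma_{\theta_{0}}^{(i)})^{-1/2}(\partial_l\Sigma_{\theta}^{(i)})(\Sigma_{\theta_{0}}^{(i)})^{-1/2}$, and the covariance part of $g$ becomes $\tfrac12\bigl[\log\det W+\tr(W^{-1})-p\bigr]$; the elementary expansions $\log\det(I+H)=\tr H-\tfrac12\tr(H^2)+O(\|H\|^3)$ and $\tr((I+H)^{-1})=p-\tr H+\tr(H^2)+O(\|H\|^3)$ give $\tfrac12[\log\det W+\tr(W^{-1})-p]=\tfrac14\tr\bigl((W-I)^2\bigr)+O(\|W-I\|^3)$ (the first-order terms cancel), so the Hessian at $\theta_{0}$ equals $\tfrac12\tr\bigl(\partial_l W\,\partial_k W\bigr)=\tfrac12\tr\bigl(A_l^{(i)}A_k^{(i)}\bigr)$ by the cyclic property of the trace. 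Adding the two contributions yields \eqref{eq:Vstar}. (Equivalently one differentiates $\log\det\Sigma_{\theta}^{(i)}$ and $\tr((\Sigma_{\theta}^{(i)})^{-1}\Sigma_{\theta_{0}}^{(i)})$ in place via $\partial\log\det\Sigma=\tr(\Sigma^{-1}\partial\Sigma)$ and $\partial\Sigma^{-1}=-\Sigma^{-1}(\partial\Sigma)\Sigma^{-1}$, using the cancellations that occur once $\Sigma=\Sigma_{\theta_{0}}^{(i)}$ is substituted.)

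\emph{Positive semi-definiteness, and the main obstacle.} Both matrices in \eqref{eq:Vstar} are Gram matrices: $\bigl((v_l^{(i)})\trans v_k^{(i)}\bigr)_{l,k}$ is the Gram matrix of $v_1^{(i)},\dots,v_d^{(i)}\in\RR^p$, while $\bigl(\tr(A_l^{(i)}A_k^{(i)})\bigr)_{l,k}=\bigl(\tr(B_l^{(i)}B_k^{(i)})\bigr)_{l,k}$ is the Gram matrix of the symmetric matrices $B_l^{(i)}:=(\Sigma_{\theta_{0}}^{(i)})^{-1/2}(\partial_l\Sigma_{\theta}^{(i)})(\Sigma_{\theta_{0}}^{(i)})^{-1/2}$ for the Frobenius inner product; each is therefore positive semi-definite, hence so is their sum. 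The only genuinely technical step is the second derivative of the covariance part of $g$: the substitution $W$ (equivalently, Taylor-expanding $\log\det$ and $\tr(\Sigma^{-1}\Sigma_{\theta_{0}}^{(i)})$ around $\Sigma=\Sigma_{\theta_{0}}^{(i)}$) is what keeps the bookkeeping of the trace terms under control and is where I would concentrate the care.
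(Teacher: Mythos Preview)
Your proposal is correct and follows the same overall architecture as the paper: reduce to the Gaussian--Gaussian Kullback--Leibler divergence via matching of the first two moments, compute the Hessian of that closed-form divergence at $\theta_{0}$, and verify positive semi-definiteness by recognizing Gram matrices. The identification step and the positive semi-definiteness argument are essentially identical to the paper's.

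The one genuine methodological difference is in the Hessian computation. The paper differentiates the expression $\log\det\Sigma_\theta+\tr(\Sigma_{\theta_0}\Sigma_\theta^{-1})+(\mu_\theta-\mu_{\theta_0})\trans\Sigma_\theta^{-1}(\mu_\theta-\mu_{\theta_0})$ directly in $\theta$, invoking Jacobi's formula and a second-derivative-of-determinant lemma (their Lemmas~\ref{lemma:JacobForm} and~\ref{lemma:secDerMat}), together with $\partial\Sigma^{-1}=-\Sigma^{-1}(\partial\Sigma)\Sigma^{-1}$, and then cancels terms after substituting $\Sigma=\Sigma_{\theta_0}$. Your route normalizes first via $W(\theta)=\Sigma_{\theta_0}^{-1/2}\Sigma_\theta\,\Sigma_{\theta_0}^{-1/2}$ and reads off the Hessian from the second-order Taylor expansions of $\log\det(I+H)$ and $\tr((I+H)^{-1})$ around $H=0$. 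Your approach is more self-contained (no auxiliary lemmas needed) and makes the cancellation of first-order terms transparent; the paper's approach is more mechanical but requires carrying several trace terms before the final simplification. Interestingly, the paper uses the same substitution $B=\Sigma_{\theta_0}^{-1/2}\Sigma\,\Sigma_{\theta_0}^{-1/2}$ for the minimization step but then abandons it for the Hessian, whereas you exploit it for both. Your parenthetical remark that one can alternatively differentiate in place is exactly the paper's route.
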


The matrix $V_{\q_{0}}^{\bi}$ in Equation~\eqref{eq:Vstar} is the sum of two nonnegative-definite matrices 
and is strictly positive-definite as soon as one of these matrices is invertible.
The second matrix on the right is invertible if and only if the vectors $\frac{d}{d\q_l} \mu_{\q}^{\bi}|_{\q = \q_{0}}$, $l=1,\ldots, d$, are linearly independent.
The matrix $V_{\q_0}^{\bi}$ typically does not have a closed form analytic expression and hence numerical methods are used to evaluate it; see Section~\ref{sec:Simulations} for examples.

The following theorem specializes Proposition~\ref{thm:BvM direct proof} to the Gaussian misspecification of model \eqref{eq: model}.
The proof of the theorem consists of verifying the conditions of Proposition~\ref{thm:BvM direct proof}, 
and is given in Section~\ref{sec:proof:cor}.

\begin{theorem}\label{cor:BvM:hierarhical}
Consider the hierarchical model \eqref{eq: model} with $\q_0$ an interior point of the parameter set $\Theta\subset\RR^d$.
Assume that the mean and variance functions $\q \mapsto \mu_{\q}^{\bi}$ and $\q \mapsto \Sigma_{\q}^{\bi}$ are
twice continuously differentiable in a neighbourhood of $\q_{0}$, and that the matrices $\Sigma_{\q}^{\bi}$  are invertible.
Assume that $\sup_{i\in\NN}P_{\q_{0}}^{\bi} \|X_i\|^4 < \infty$ and that the prior density $\pi$ is continuous and positive  at $\q_{0}$.
Then \eqref{eq:BvM-convergence} of Proposition~\ref{thm:BvM direct proof}  holds with $\q^* = \q_{0}$ and $V_{*}=V_{\q_{0}}$
in both of the following cases:
\begin{enumerate}[label=(\roman*)]
\item[(a)] $\q \mapsto \mu_{\q}^{\bi}$ and $\q \mapsto \Sigma_{\q}^{\bi}$ are independent of $i$,
with  $\inf_{\|\q-\q_0\|\ge\d}\|\mu_{\q}^\bi-\m_{\q_0}^\bi\|+\|\Sigma_{\q}^\bi-\Sigma_{\q_{0}}^\bi\|>0$  for every $\d>0$, and
the matrix $V_{\q_0}:=V_{\q_{0}}^{\bi}$ given in Equation~\eqref{eq:Vstar} is invertible.
\item[(b)] $\q \mapsto \mu_{\q}^{\bi}$ and $\q \mapsto \Sigma_{\q}^{\bi}$ are equi-continuous on $\Theta$ ($i\in\NN$),
the maximum likelihood estimator of $\q$ is bounded in probability,
the limit $V_{\q_{0}}:=\lim_{N\to\infty} N^{-1}\sum_{i=1}^{N}V_{\q_{0}}^{\bi}$ exists and is invertible, and, 
for every $M>\delta > 0$,
\end{enumerate}
\newcommand\smSp{\mkern-0.75mu}
\begin{align}
\label{EqAsymptoticIdentifiability}
\limsup_{N\to\infty} \inf_{\q: M\ge \|\q-\q_{0}\|\ge \delta}\frac{1}{N} \sum_{i=1}^N \Bigl[&
\bigl(\mu_{\q}^{\bi} - \mu_{\q_{0}}^{\bi}\bigr)\trans \bigl(\Sigma_{\q}^{\bi}\bigr)^{-1} \bigl(\mu_{\q}^{\bi} - \mu_{\q_{0}}^{\bi}\bigr) \nonumber\\
&+ \smSp \tr\bigl(\Sigma_{\q_{0}}^{\bi}\bigl(\Sigma_{\q}^{\bi}\bigr)^{-1}-I\bigr) \smSp - \smSp \log\det\bigl(\Sigma_{\q_{0}}^{\bi}\bigl(\Sigma_{\q}^{\bi}\bigr)^{-1}\bigr)\Bigr] \smSp > 0 \smSp.\end{align}
\end{theorem}

The theorem ensures that the misspecified posterior distribution accumulates 
its mass within balls with a radius of the order $1/\sqrt{N}$ around the true parameter $\q_{0}$ and hence the posterior distribution is consistent at the optimal rate.
However, as noted following Theorem~\ref{thm:BvM direct proof}, credible sets do not necessarily coincide with confidence sets, not even asymptotically.
This is illustrated in our numerical analysis in Section~\ref{sec:Simulations}.

The left side of Equation~\eqref{EqAsymptoticIdentifiability} involves the average  Kullback-Leibler divergence 
between the surrogate Gaussian distributions \eqref{EqMisspecifiedQ}.
The condition ensures that the model is asymptotically identifiable. In case (a) of the theorem,
these averages are fixed in $N$ and condition \eqref{EqAsymptoticIdentifiability} is ensured by the condition
on the means $\m_\q^i$ and covariance matrices $\Sigma_\q^i$.

\section{Applications}\label{sec:examples}
In this section, we examine the misspecified posterior distribution based on the surrogate Gaussian likelihood \eqref{EqMisspecifiedQ}
in three versions of the hierarchical model \eqref{eq: model}. The first example is primarily of theoretic interest, whereas the other two examples are of more practical relevance,
with the  operator $T_{\q}^{\bi}$  is based on the forward map of a partial differential equation. 

In the first example, the functions $f_i$  in the initial step of the hierarchical model are Brownian motions, and the transformation
$T_{\q}^{\bi}(f_i)$ is based on the square $L_2$-norm of these functions shifted by a scalar parameter of interest $\q$.
By taking different limits on the integral, we can explore a non-i.i.d. model for the observations.

The second example involves the time-independent Schr\"odinger equation, where the parameter 
$\q$ specifies the boundary condition of the equation. We consider this example with i.i.d.\ data.

The third example deals with general parabolic PDEs, evaluated at different time points 
$t_i$, which creates a non-i.i.d. setting. This scenario is similar to situations in astronomy, 
where we observe snapshots of the universe at various stages of its evolution. 
The parameter of interest in this case specifies either the boundary condition or the potential function.

\subsection{Square integral operator}\label{subsec:square integral operator}
For $\q, z \in(0,\infty)$, define a map $\tau_{z, \q}: L^2[0,z] \to L^2[0,z]$ by
\begin{align}\label{def:exam1:T}
\tau_{z,\q}(h)(t) = \int_0^t (h(s) - \q)^2 \,ds,\qquad t\in[0,z].
\end{align}
We take the functional parameter $f$ in \eqref{eq: model} to be a Brownian motion on $[0,z]$,
and consider observing the projection of the function $\tau_{z, \q}(f)$ on the Legendre polynomials,
subject to Gaussian noise. The details are as follows.

The normalised Legendre polynomials on the interval $[0,z]$ take the form, for $x \in [0, z]$ and $j \in \NN_0$,
\begin{align}\label{def:Legendre}
e_{j}^z(x) = \sum_{k=0}^j a_{z,j,k} x^k,\qquad\qquad a_{z,j,k} = z^{-k-1/2}\sqrt{2j+1}(-1)^{j+k} \binom{j}{k}\binom{j+k}{k}.
\end{align}
Let $\langle \cdot,\cdot\rangle_{L^2[0,z]}$ denote the inner product in $L^2[0,z]$, so that $\langle h,e_j^z\rangle_{L^2[0,z]}$ are the 
coefficients in the expansion of $h\in L_2[0,z]$. By elementary computation (see \eqref{EqMeanQuadratic}), it can be seen that 
\begin{align}\label{eq:mu square int op}
\expt\langle \tau_{z,\q}(f),e_j^z\rangle_{L^2[0,z]} = \begin{cases}
\tfrac{1}{6}z^{5/2} + \tfrac{1}{2}z^{3/2}\q^2, & j = 0,\\
\tfrac{1}{12}\sqrt{3}z^{5/2} + \tfrac{1}{6}\sqrt{3}z^{3/2}\q^2, & j= 1,\\
\tfrac{1}{60}\sqrt{5}z^{5/2}, & j=2,\\
0, & j \geq 3.
\end{cases}
\end{align}
Since the expectations vanish after the third coordinate, we restrict the observational model to the
first three coefficients. Consider the model \eqref{eq: model} with $f_i$ i.i.d.\ Brownian motions and for given
$z_i>0$ and a given positive-definite matrix $\Lambda$, 
\begin{equation}
\label{EqMeanCovSquareIntegral}
T_{\q}^{\bi}(f_i)=\bigl(\langle \tau_{z_i,\q}(f_i),e_j^z\rangle_{L^2[0,z]}\bigr)_{j=0,1,2},\qquad\qquad
\Lambda^{\bi}=\Lambda.
\end{equation}
Thus the observational model can be written, for $G$ Wiener measure,
\begin{align}
\label{def:exam1:X}
X_i = T_{\q}^{\bi}(f_i)+\gamma_i,\qquad \gamma_i \iid N_3(0,\Lambda),\qquad f_i\iid G,\qquad i=1,\ldots, N.
\end{align}
In this hierarchical structure, the distribution of $X_i$ is a mixture of multivariate normal distributions,
and hence non-Gaussian. We consider the posterior distribution for the parameter
$\q$ relative to the Gaussian surrogate model with the true mean vector and covariance matrix,
given in \eqref{EqMeanCovarianceModel}. 

The following corollary gives the limiting Gaussian law of the posterior distribution.

\begin{corollary}\label{cor:examp1}
Consider the observational model \eqref{def:exam1:X} for a given sequence $(z_i)_{i\in\NN}$ of positive
numbers. Let $\Theta \subset (0,\infty)$ be endowed with a prior density $\pi$ 
that is continuous and strictly positive at the interior point $\q_{0}$. Then
\begin{enumerate}
\item \label{thm: BvM Gaussian Missp works} If the sequence $(z_i)_{i\in\NN}$ is bounded and satisfies $\limsup_{N \to \infty} N^{-1} \sum_{i = 1}^N z_i^3 > 0$, and the limit $\lim_{N\to\infty} V_{\q^*, N} =V_{*}$ exists, then the misspecified posterior distribution in Equation~\eqref{def:posterior} converges to the Gaussian limit as in Equation~\eqref{eq:BvM-convergence}.
\item If $z_i \ra 0$, then $\lim_{N\to\infty} V_{\q^*, N} = 0$.
\end{enumerate}
\end{corollary}

The proof of the corollary is based on Theorem~\ref{cor:BvM:hierarhical} and is deferred to Section~\ref{sec:cor:examp1}.
The limiting misspecified posterior covariance $V_{*}^{-1}$ does not necessarily match 
the covariance of the true posterior distribution.
We compare the true and misspecified posterior numerically for synthetic data sets in Section~\ref{sec:Simulations}.

\begin{remark}\label{remark:counter}
This model does not satisfy the condition imposed in \cite{Kleijn12} for obtaining contraction at $\sqrt{N}$-rate.
In that paper it is assumed that there exists a function $m^*$ with $P_{\q_0,i}(e^{s m^*(X_i)}) < \infty$ for some $s>0$, 
such that for every $\q_1,\q_2$ in some open neighbourhood of $\q^*$,
\begin{align}\label{eq:counter}
\Bigl|\log \frac{q_{\q_1,i}}{q_{\q_2,i}}(X_i)\Bigr|\leq m^*(X_i)\|\q_1-\q_2\|.
\end{align}
To see that this fails, for simplicity take $\q_{0} = 0$ (hence $\q^* = 0$ as well) and $d=1$, i.e.\ consider only the first coordinate of the three-dimensional observation.
Recalling that the 0th Legendre-polynomial is equal to $1$ on the whole unit interval, 
we get that $X_{i} = \int_0^1 \int_0^t f_i(s)^2\, ds\, dt + \gamma_{i}$, 
for $f_i$ a Brownian motion and $\gamma_i \sim N(0,\Lambda)$.
In view of  Equations~\eqref{eq:mu square int op} and \eqref{eq:sig square int op}, 
\begin{align*}
\log \frac{q_{\q_1,i}}{q_{\q_2,i}}(X_i)
&= \frac{1}{2} \log \Big(\frac{\sigma_{\q_2}^{\bi}}{\sigma_{\q_1}^{\bi}}\Big)^2 + \Big(\frac{X_i - \mu_{\q_2}^{\bi}}{\sigma_{\q_2}^{\bi}}\Big)^2 - \Big(\frac{X_i - \mu_{\q_1}^{\bi}}{\sigma_{\q_1}^{\bi}}\Big)^2,
\end{align*}
which is  a quadratic function in $X_i = \int_0^1 \int_0^t f_i(s)^2\, ds\, dt + \gamma_{i}$. Since
$\int_0^1 \int_0^t f_i(s)^2 \,ds\, dt$ is lower bounded by a multiple of a $\chi_1^2$ distributed random variable,
there exists no function  $m^*$ satisfying \eqref{eq:counter} such that $P_{\q_0,i} e^{s m^*(X_i)}$ is finite for some $s>0$.
\end{remark}


\subsection{Schrödinger equation}\label{sec:Schrodinger}
The time-independent Schrödinger equation is a simple PDE-constrained,
non-linear inverse problem, which can serve as a benchmark for testing methodology.
Several authors have focused on nonparametric recovery of the underlying potential function.
Minimax posterior contraction rates using multi-scale analysis were derived for Gaussian priors 
in \cite{nickl2020convergence,Monardetal2021} and uniform sequence priors in \cite{Nickl18}, while 
Bernstein-Von Mises results on linear functionals were derived in \cite{Nickl18,Monardetal2021}.  In \cite{Koers:Sz:Vaart:2023}
 adaptive posterior contraction rates were derived using a linearization technique.
Presently we focus on the hierarchical setting \eqref{eq: model} with random potential function $f$,
and on estimating a parameter in the boundary function.

For a bounded domain $\O \subset \RR^p$, a positive function $f: \O\to(0,\infty)$
and a function $g_{\q}: \partial\O\to\RR$ that is known up to a parameter $\q\in\RR^d$, 
consider the solution $u=u_{\q, f}: \O \to \RR$  solving the equation
\begin{equation}\label{eq:Schrodinger}
\Biggr\{\begin{aligned}
\Delta u - 2f u &= 0, \qquad && \text{on } \O,\\
u &= g_{\q},\qquad && \text{on } \partial \O.
\end{aligned}
\end{equation}
The existence of a unique solution $u_{\q,f}$ is guaranteed for a sufficiently smooth domain and sufficiently smooth
functions $f$ and $g_\q$ (e.g.\ $f\in C^s(\O)$ and  $g_{\q} \in C^{s+2}(\O)$ for some $s>0$
suffice, see Proposition~$25$ in \cite{Nickl18}. For an appropriate orthonormal basis $(e_j)_{j\in\NN}$ of $L^2(\O)$
and given $p\ge d$, we define 
\begin{equation}
\label{def:Schrodinger:inner}
T_{\q}(f)=\bigl(\langle u_{\q,f}, e_j \rangle_{L^2(\O)}\bigr)_{j=1,\ldots,p}\in\RR^p.
\end{equation}
With this choice the hierarchical model \eqref{eq: model} can be written in the form,
for a probability distribution $G$ on a set of smooth, positive functions,
\begin{align}\label{def:Schrodinger:obs}
X_i=T_{\q}(f_i)+\gamma_i,\qquad \gamma_i \iid N(0,\Lambda),\qquad f_i\iid G,\qquad i=1,\ldots, N.
\end{align}
The observations $X_1,\ldots,X_N$ are i.i.d., but the functions $u_{\q,f_i}$, and hence the mean functions of the observations,
are random. 

We consider the posterior distribution for the parameter
$\q$ relative to the Gaussian surrogate model with the true mean vector and covariance matrix,
as given in \eqref{EqMeanCovarianceModel}. 
The following corollary describes the limiting law of the corresponding misspecified posterior distribution.
The proof is based on Theorem~\ref{cor:BvM:hierarhical} and is deferred to Section~\ref{sec:cor:examp2}.

\begin{corollary}\label{cor:examp2}
Consider the observational model \eqref{def:Schrodinger:obs} and assume that the function $\q\mapsto g_{\q}(x)$ in Equation~\eqref{eq:Schrodinger} is twice continuously
differentiable in $\q$ for every $x\in \partial \O$, and the functions $g_{\q}$, $\nabla g_{\q}$ and $\nabla^2g_{\q}$ are uniformly bounded on $\partial \O$.
Furthermore, assume that the vectors $\frac{d}{d\q_j}\mu_\q\big|_{\q=\q_{0}}\in\RR^p$, $j=1,\ldots,d$ are linearly independent,
and that either $\E_fT_\q(f)\not=\E_fT_{\q_0}(f)$, for every $\q\not=\q_0$ and the parameter set $\Theta$ is compact or
$\inf_{\|\q-\q_0\|\ge\d}\|\E_fT_\q(f)-\E_fT_{\q_0}(f)\|>0$, for every $\d>0$.
Finally, assume that the prior density $\pi$ is continuous and strictly positive at the interior point $\q_{0}$ of $\Theta$.
Then the misspecified posterior distribution admits the Gaussian approximation given in Equation~\eqref{eq:BvM-convergence exp} with $\q^* = \q_{0}$ and $V_{*}$ given by Equation~\eqref{eq:Vstar}.
\end{corollary}

We investigate the non-asymptotic behaviour of the misspecified posterior distribution
in a numerical study with simulated data in Section~\ref{sec:Simulations}.  

Rather than the observational model \eqref{def:Schrodinger:obs} based on observing  $p$
continuous inner products of the (random) functions $u_{\q, f_i}$, one could consider 
observing this function  at a grid of points over the domain $\O$. In the usual
theoretical setting this grid $(x_l)$ would consist of $M\ra\infty$ points, and the discrete averages
$M^{-1}\sum^M_{l=1} u_{\q,f_i}(x_l)e_j(x_l)$ would closely approximate the continuous
inner products used in \eqref{def:Schrodinger:inner}. In a theoretical analysis we would
replace the continuous inner products by the discrete inner products, but retain the
Gaussian error structure in \eqref{def:Schrodinger:obs}. For the usual choices of $M$ and $N$,
similar conclusions can be expected. Of course, in both cases the restriction to $p$ inner products
entails a loss of information, but this is inherent to working with a simplified (misspecified) model.

\subsection{Parabolic PDE-constrained inverse problem}\label{sec:PDE}
In this section, we consider parabolic PDEs with different time horizons.
This model is a step towards the more complex PDE-constrained inverse problems considered in practice,
illustrating the observation of a system at different stages of its evolution.

The model generalizes the PDE considered in \cite{kekkonen2022consistency}, who
derives a minimax posterior contraction rate for the attenuation (or absorption/depletion/creation) coefficient ($c_\q$ in the following)
in the context of the heat equation ($a=I$ and $b=f=0$).  In the same model \cite{Koers:Sz:Vaart:2023} derives adaptive psoterior contractin rates based on a linearization argument.
Here we consider a hierarchical model with a random source function $f$ and focus on estimation of parameters 
specifying the boundary function and attenuation coefficient.

For a bounded domain $\O\subset\RR^d$, and functions $a: \O\to\RR_+^{d\times d}$, $b: \O\to\RR^d$, and $f, c_\q: \O\to(0,\infty)$, 
$g_\q: (0,t_{\max})\times\partial\O\to\RR$, let $u_{\q,f}$ be the solution to the elliptic PDE, for given $t_{\max}>0$,
\begin{equation}\label{eq:parabolic}
\left\{\begin{aligned}
-\frac{\partial u(t,x)}{\partial t} + \mathcal{A}_\q u(t,x) &= f(x), \quad &&\text{on } (0,t_{\max})\times \O,\\
u(t,x) &= g_{\q}(t,x), \quad &&\text{on } [(0,t_{\max})\times \partial \O]\cup [\{t_{\max}\}\times \overline{\O}],
\end{aligned}\right.
\end{equation}
where $\mathcal{A}_\q$ acts on the function $x\mapsto u(t,x)$ for fixed $t$, and is given by,
for $v \in C^2(\O)$ with gradient $Dv$ and second derivative matrix $D^2v$,
\begin{align*}
\mathcal{A}_\q v := -\frac{1}{2}\tr(a D^2v) - \langle b, D v\rangle + c_{\q} v.
\end{align*}
The diffusion function $a$ and transport function $b$ are assumed fixed, while the
functions $c_\q$ and $g_\q$ are assumed known up to
a parameter $\q\in\Theta\subset\RR^d$. The source function is $f$ is considered random and unobserved.
We assume that the regularity conditions given in \cite{Feehan15} are satisfied, so that the solution $u_{\q,f}$ exists and can 
be represented by a Feynman-Kac formula. In particular, all functions  $a, b, c_{\q}, f, g_\q$ are continuous.
Following this reference, the PDE is posed in backward form, with a boundary condition at $t_{\max}$.

Fix  an orthonormal basis  $(e_{j})_{j \in\N}$ for $L^2(\O)$ with $\sup_{j = 1, \ldots, p} \|e_j\|_{\infty} < \infty$.
For a given sequence $(t_i)_{i\in\N}$ of positive numbers, define
$$T_{\q}^i(f)=\bigl(\langle u_{\q,f}(t_i,\cdot), e_j \rangle_{L^2(\O)}\bigr)_{j=1,\ldots,p}\in\RR^p.$$
We then consider observing $X_1,\ldots,X_N$ given by \eqref{def:Schrodinger:obs}, 
where $G$ is taken to be a distribution on the space of nonnegative continuous functions such that $\expt_G\|f\|_\infty^4<\infty$. 

Thus our observational scheme follows the hierarchical model \eqref{eq: model}.
We form the misspecified posterior distribution \eqref{def:posterior} according to the surrogate likelihood given
in \eqref{EqMeanCovarianceModel}. The next corollary describes the limiting law of this posterior distribution.
The proof is deferred to Section~\ref{sec:cor:examp3}.

\begin{corollary}\label{cor:examp3}
Let $\q_{0}$ be an interior point of the compact parameter set $\Theta$ and assume that $(t_i)_{i\in\N}$ are positive numbers 
such that $V_{*} =\lim_{N\to\infty} N^{-1}\sum_{i=1}^N V_{*}^{\bi} $ exists and is positive definite, where $V_{*}^{\bi}$ is defined in Equation~\eqref{eq:Vstar}. 
Suppose that $\q\mapsto g_\q(t,x)$ and $\q\mapsto c_{\q}(x)$ are twice differentiable 
with equicontinuous second derivatives.
Assume that $(\mu_{\q}^{\bi}, \Sigma_{\q}^{\bi}) \neq (\mu_{\q_{0}}^{\bi}, \Sigma_{\q_{0}}^{\bi})$ for all $\q\neq\q_{0}$,
and that for all $\delta > 0$, 
\begin{equation*}
\limsup_{N \to \infty} \sup_{\q: \|\q - \q_0\| > \delta} \frac{1}{N} \sum_{i = 1}^N \|\mu_{\q}^{\bi} - \mu_{\q_0}^{\bi}\|^2 > 0.
\end{equation*}
Let $\pi$ be a prior density that is positive and continuous at $\q_{0}$.
Then the misspecified posterior distribution
admits the Gaussian approximation given in Equation~\eqref{eq:BvM-convergence exp} with $\q^*=\q_{0}$ and precision matrix $V_{*}$.
\end{corollary}


\section{Numerical analysis}\label{sec:Simulations}
In this section, we study the numerical accuracy of the Gaussian approximation relative to the misspecified and true posterior distributions, both in the toy model with the integral of the shifted squared Brownian motion and in the model involving the time-independent Schrödinger equation.

\subsection{Square integral operator}\label{subsubsec: square integral operator}
We consider the shifted, square integral operator $\tau_{1,\q}(f)=\int_0^{\cdot}\bigl(f(s)-\q\bigr)^2 \,ds$ 
given in Equation~\eqref{def:exam1:T}, where $f$ follows a Brownian motion, and for
simplicity we take $z_i=1$, for every $i=1, \ldots, N$.
The true observational model for the data $X_1, \ldots, X_N\in\RR^p$ is the hierarchical model \eqref{def:exam1:X},
with $G$ Wiener measure and $p=3$.
(We let the index of the coordinates of $X_i$ start at $j=0$, so that the $j$th coordinate corresponds to the
$j$th Legendre polynomial \eqref{def:Legendre}.)
We consider the misspecified model using the surrogate Gaussian model \eqref{EqMisspecifiedQ},
as discussed in Section~\ref{subsec:square integral operator}.

We start our analysis by comparing the true Fisher information $I_{\q_{0}}$, the inverse variance of the misspecified posterior distribution $V_{\q_0}$, 
the variance $J_{\q_0}$ of the score function of the misspecified model, and the inverse sandwich variance $J_{\q_0}/V_{\q_0}^2$. 
The quantity $V_{\q_0}=V_*$ is derived in Lemma~\ref{thm:locMinKLDiv} (where $\theta^*=\q_0$,
and all $V_{*}^{\bi}$ are equal, as the data are i.i.d.), with entries to the formula given in Equations~\eqref{eq:mu square int op} and \eqref{eq:sig square int op}.
The number $J_{\q_0}$ is the variance of the normal limiting distribution of the 
sequence $N^{-1/2}\nabla l^{(N)}(\theta_N^*)$ in Assumption~\ref{assum: regular likel} 
and can here be computed as the variance under the true (mixed) distribution \eqref{def:exam1:X} 
of the derivative of the log misspecified likelihood with respect to $\q$. 
This computation can be based on the analytical formulas and is straightforward (see Section~\ref{SectionSandwich}).

The computation of the true Fisher information is more involved. The analytical expression is
\begin{align}\label{eq:fisherInf}
I_{\q} = \expt\biggl( \frac{\int \frac{d}{d\q} p_{\q, f}(X) \,dG(f)}{\int p_{\q, f}(X) \,dG(f)}\biggr)^2,
\end{align}
where $G$ denotes the law of a Brownian motion, the outer expectation is with respect to the marginal distribution of $X$, 
and $p_{\q, f}$ denotes the conditional density of $X$ given $\q$ and $f$, i.e.
\begin{align*}
p_{\q, f}(x) = (2\pi)^{-p/2} (\det \Lambda)^{-1/2} 
e^{-\frac{1}{2}\sum_{i=0}^{p-1}\sum_{j=0}^{p-1}(x_i - \langle \tau_{1,\q} (f), e_i^1 \rangle )(\Lambda^{-1})_{ij} (x_j - \langle \tau_{1,\q} (f), e_j^1\rangle)}.
\end{align*}
We estimated the expectation in Equation~\eqref{eq:fisherInf} numerically using the Monte Carlo method, based on $10^6$ draws from the 
Brownian motion $f$.
For each draw, we approximated the forward map $\tau_{1,\q} (f)$ by computing the integral on a grid of size $100$ on $[0,1]$.
We approximated the inner integrals in Equation~\eqref{eq:fisherInf} at each data point $X$ by averaging the likelihoods $p_{\q, f}(x)$ and their derivatives over the $10^6$ draws from the Brownian motion, and next the outer expectation by averaging the resulting quotients over $10^6$ draws from $X$.

\begin{figure}
\centering
\includegraphics[width=5cm]{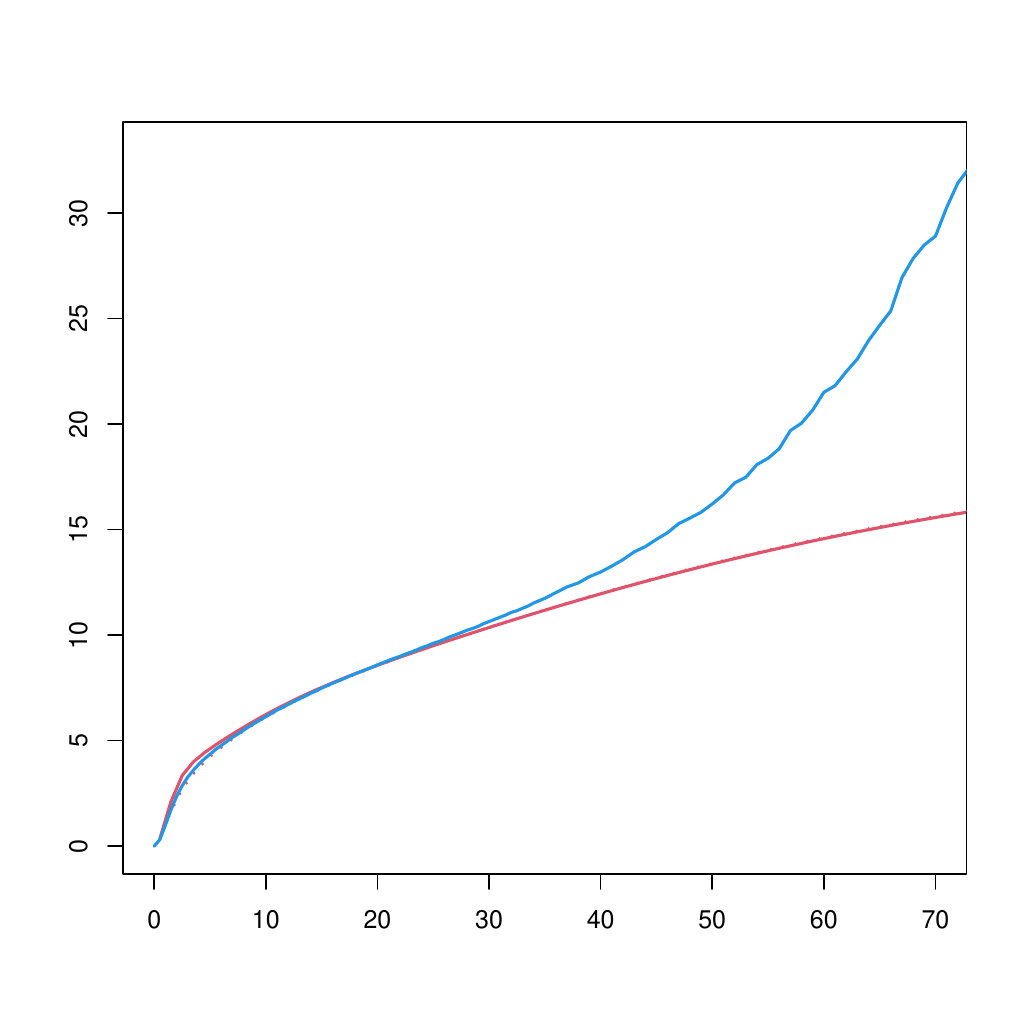}\includegraphics[width=5cm]{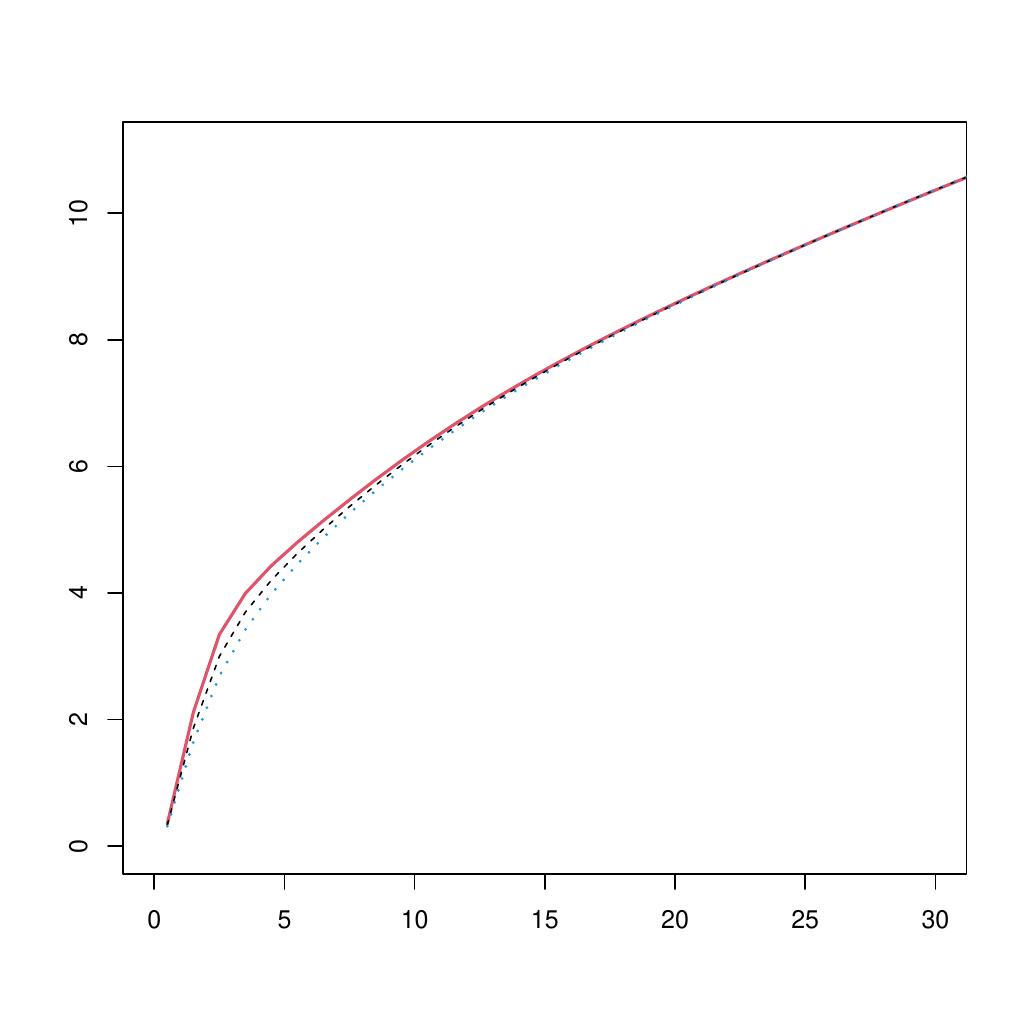}
\caption{Square integral observational model Equation~\eqref{def:exam1:X}, with $p=3$. The left panel shows the true  Fisher information $I_{\q_{0}}$ (blue) 
and the inverse variance $V_{\q_0}$ of the misspecified posterior (red) as a function of $\q_0$ on the horizontal axis. The right panel shows the inverse variance $V_{\q_0}$ of the misspecified posterior (red),
the variance $J_{\q_0}$ of the score function (black,dashed) and the inverse sandwich quantity $V_{\q_0}^2/J_{\q_0}$ (blue, dotted).}
\label{fig:toy:Fisher}
\end{figure}

The left panel of Figure~\ref{fig:toy:Fisher} shows that for small values of $\q_{0}$ the variance of the true and misspecified posterior distributions are aligned, 
but in the range $[40,70]$ the true Fisher information is larger, and hence the true posterior distribution will be more concentrated.
Inspection of the likelihood reveals that this difference disappears for extreme values of $\q_0\to\infty$, but in general this illustrates the superiority of using
the true posterior distribution, if feasible, but also that the relative efficiency depends on the true parameter value. The uncertainty quantification by the
true posterior distribution is accurate in view of the (well-specified) Bernstein-von Mises theorem. 
The right panel of Figure~\ref{fig:toy:Fisher} shows that in the present case the uncertainty quantification by the misspecified posterior distribution
is relatively accurate as well, as the deviation between the inverse sandwich value $V_{\q_0}^2/J_{\q_0}$ (the inverse variance of the
misspecified posterior mean) and  $V_{\q_0}$ (the inverse of the variance of the misspecified posterior distribution) is small. We restricted the range
of the plot to $\q_0\in [0,30]$ to highlight the small difference, mainly in the range $[3,10]$; for $\q_0>30$ the two curves are very close.

The preceding interpretations refer to the case of large sample size $n$, where the given quantities are theoretically guaranteed to give
accurate approximations. For further insight we compared the true and misspecified posterior distributions also for sample sizes
$N = 5, 50, 500$,  where we varied the shift parameter equal to $\q_{0}=5, 20, 70$. In all cases the prior density was
taken uniform. The results are shown in Figure~\ref{fig:density:small}, where the blue solid curve is the true posterior density and
the red solid curve the misspecified posterior density. These pictures confirm the large sample findings of the preceding paragraph.
The locations of the true and misspecified posterior density are similar, less so for $\theta_0=5$ 
where the true posterior density is better located, but the misspecified posterior also covers the true parameter. 
The spreads do not differ much in the cases $\q_0=5,20$, but more so in the case of $\q_0=70$, where the true posterior is more concentrated. 
The true value of the parameter is captured in a 95 \% credible interval in all cases, also for the misspecified
posterior density. The dashed curves in the figure show normal densities with means matching the true and 
misspecified posterior densities and variances equal to the asymptotic values $V_{\q_0}^{-1}$ and $I_{\q_0}^{-1}$, 
in corresponding colors. These are close in all cases, with the biggest discrepancies for the true posterior density when $N=5, 20$ 
and $\q_0=70$, when the asymptotic regime apparently kicks in only for larger sample sizes and the true posterior is less Gaussian.

We found the computational advantage of the misspecified posterior distribution to be substantial in this example. Not only could it
be computed in a fraction of the time needed for the true posterior, the computation also did not suffer from
the serious numerical instabilities inherent to computing the true posterior density by its form of a mixture over a high-dimensional
latent variable.

\begin{figure}
\centering
\includegraphics[width=5cm,height=3cm]{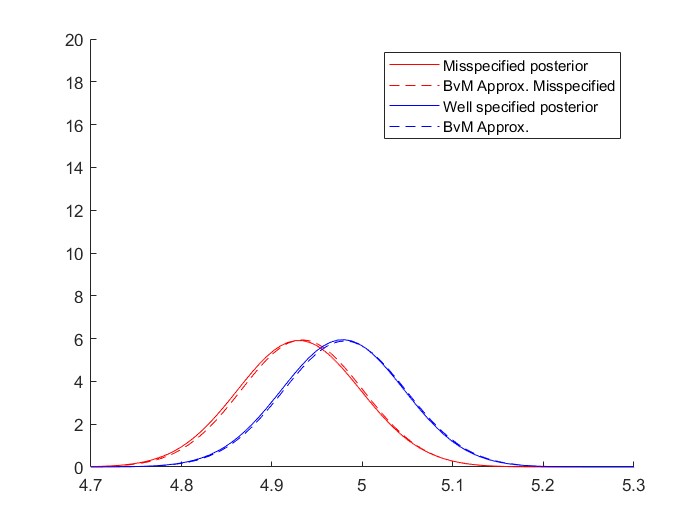}\hskip-4mm\includegraphics[width=5cm,height=3cm]{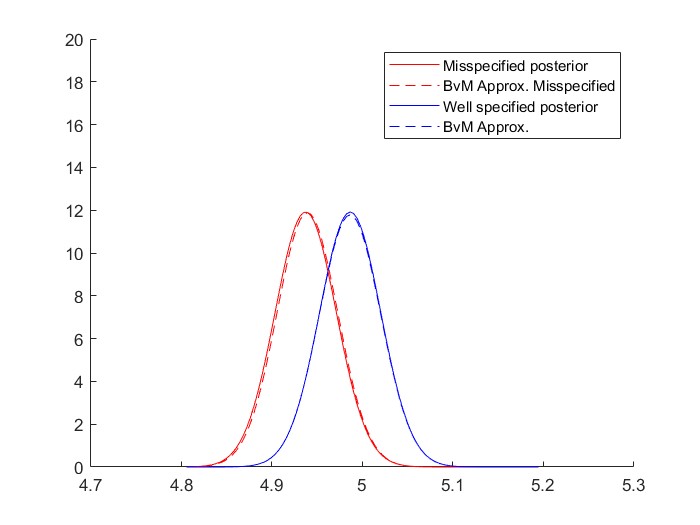}\hskip-4mm\includegraphics[width=5cm,height=3cm]{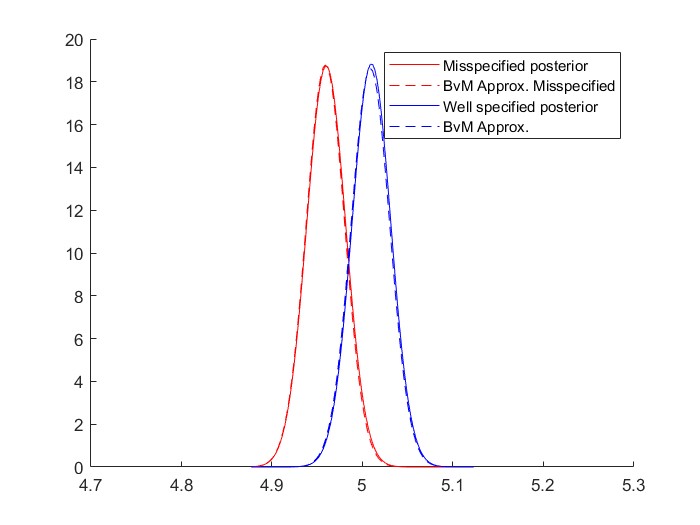}
\includegraphics[width=5cm,height=3cm]{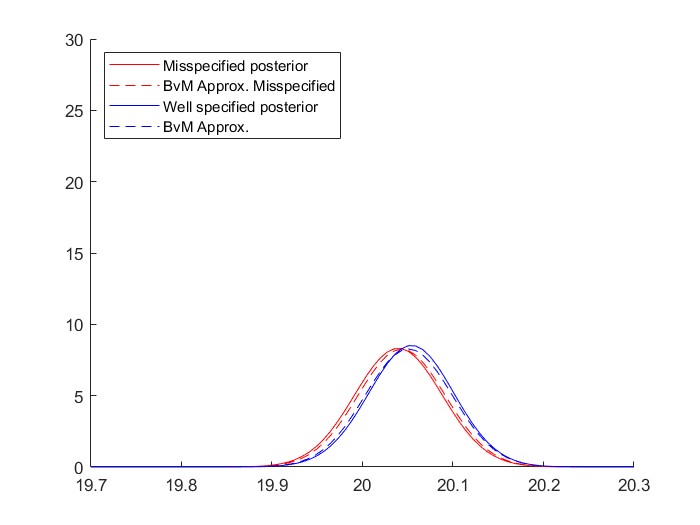}\hskip-4mm\includegraphics[width=5cm,height=3cm]{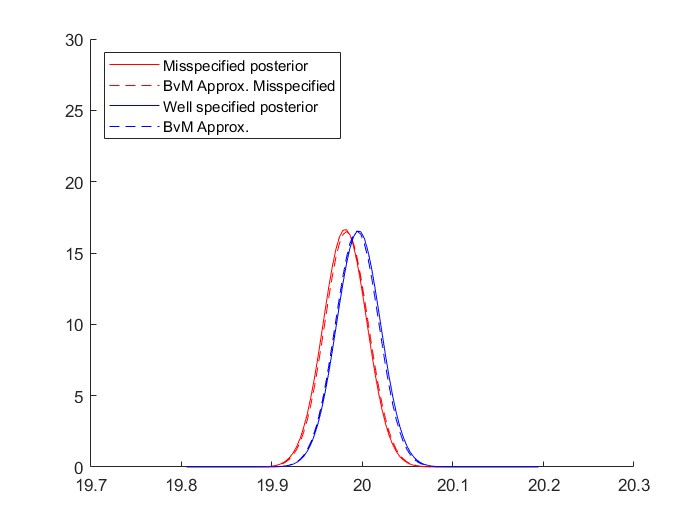}\hskip-4mm\includegraphics[width=5cm,height=3cm]{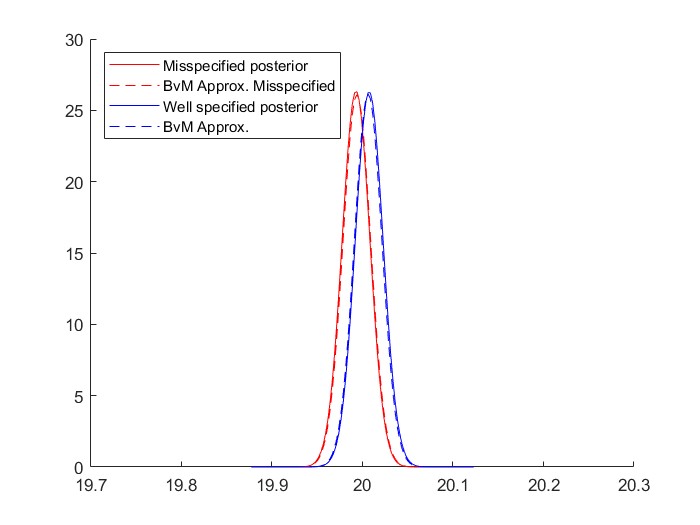}
\includegraphics[width=5cm,height=3cm]{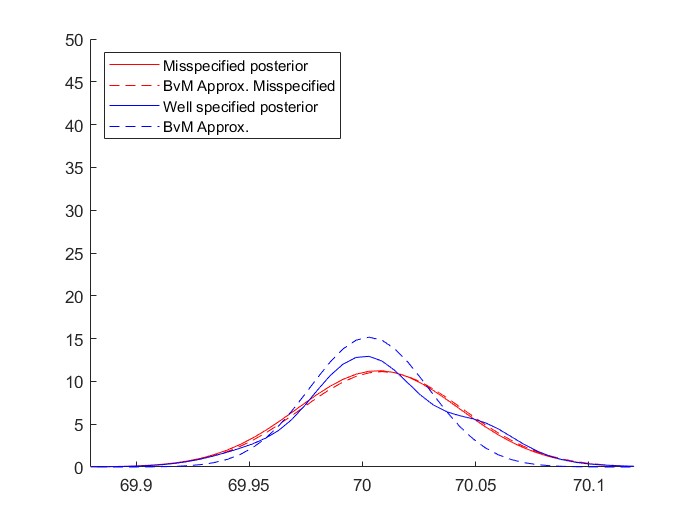}\hskip-4mm\includegraphics[width=5cm,height=3cm]{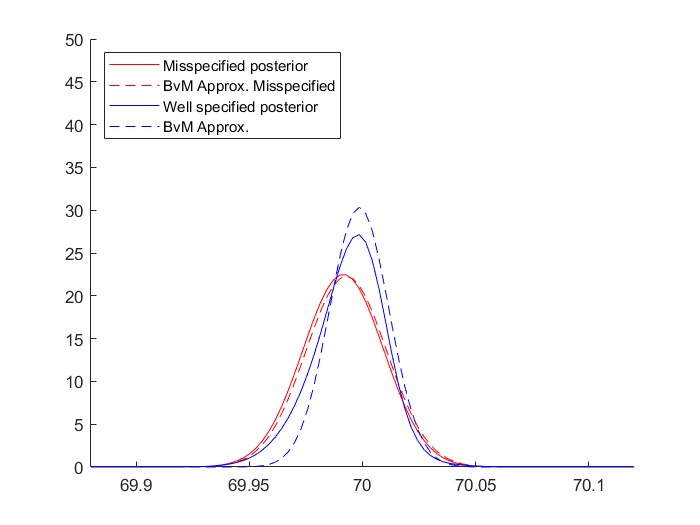}\hskip-4mm\includegraphics[width=5cm,height=3cm] {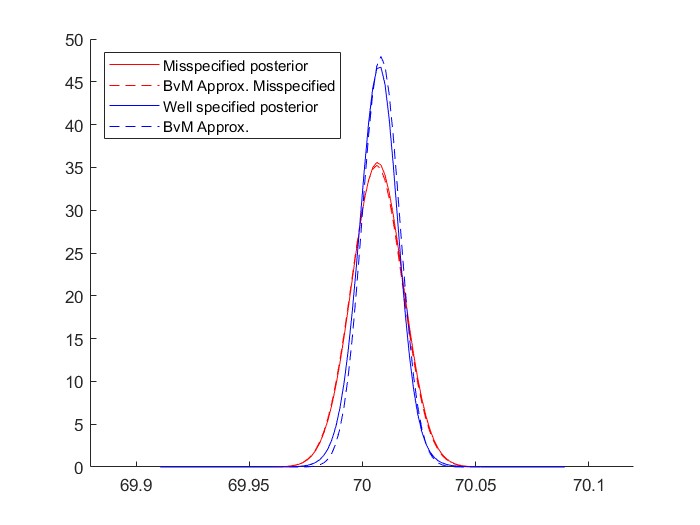}
\caption{Square integral observational model Equation~\eqref{def:exam1:X} with $p=3$. Each plot shows realisations for a single sample of observations of the true (blue) and misspecified (red) posterior densities and their corresponding limiting Gaussian approximations (dashed). From top to bottom the true parameter increases from $\q_0=5, 20$ to $70$, 
and from left to right the sample size increases from $N=50, 250$ to $1250$.}
\label{fig:density:small}
\end{figure}

\subsection{Schrödinger equation}
We consider the observational model Equation~\eqref{def:Schrodinger:obs}
 resulting from the time-independent Schrödinger equation~\eqref{eq:Schrodinger}, investigated in Section~\ref{sec:Schrodinger}.
We choose the domain $\O$ equal to the unit square, with the function $g_{\q}(x,y) = (x-\frac{1}{2})^2 + \q^2 y$ giving the boundary condition, 
for $(x,y) \in \partial \O$.  As the latent process in the first layer of the hierarchical model we take 
$f(x,y) = e^{2B_1(x) + 3B_2(y)}$, for $(x,y) \in [0,1]^2$, where $B_1, B_2$ are i.i.d.\ standard Brownian motions.
For the latent mean \eqref{def:Schrodinger:inner}, we use the basis functions $(x,y)\mapsto e_{i,j}(x,y):= e_i^1(x)e_j^1(y)$ for $0\leq i,j \leq 2$, 
where $e_i^1$ denotes the $i$th normalised Legendre basis on $[0,1]$ given in \eqref{def:Legendre} with $z=1$. Thus 
the dimension of the observations $X_i$ is $p=9$.

Computing the true posterior distribution was beyond our capacity. A straightforward mixed analytical and simulation approach as in
the preceding section would require an extreme number of calls of a PDE solver as well as cope with numerical instabilities.
An MCMC procedure would have to circumvent (or fill in) the latent Brownian bridge functions, two \emph{per observation}, and would
 pose a very substantial challenge, which is certainly not met by any of-the-shelve implementation, if feasible at all.
We therefore focus on computing the misspecified posterior distribution and on
comparing its variance to the variance of the posterior mean, given by the sandwich formula.

We computed the misspecified posterior density analytically by renormalising the product of the
misspecified Gaussian likelihood and the prior density, which was taken uniform. 
The mean vector and covariance matrix of the Gaussian likelihood
given in \eqref{EqMeanCovarianceModel} take the forms (with $\Lambda=I$)
\begin{align*}
\mu_\q&=\E _f T_\q(f)= \bigl(\E_f \langle u_{\q,f},e_{i,j}\rangle\bigr)_{i,j=0..2},\\
\Sigma_\q&=\bigl(\E_f\langle u_{\q,f},e_{i,j}\rangle\langle u_{\q,f},e_{i',j'}\rangle\bigr)_{i,j=0..2,i',j'=0..2}-\m_\q\m_\q^T+I.
\end{align*}
Here $u_{\q,f}$ is the solution to the Schr\"odinger Equation~\eqref{def:Schrodinger:obs} for given $f$ and $g_\q$, and
$\langle\cdot,\cdot\rangle$ is the inner product of $L_2\bigl([0,1]^2\bigr)$. We approximated the  expectations $\E_f$  
by averages over 500 samples of $f$. Because the boundary function $g_\q$ is a linear combination 
of the boundary functions $(x,y)\mapsto (x-1/2)^2$ and $(x,y)\mapsto y$ with coefficients $(1,\q^2)$, the solution
$u_{\q,f}$ is a linear combination of the solutions for these two boundary functions with the same coefficients.
This helped to speed up the computations for different values of $\q$, where it was sufficient to call the PDE solver
for a sample of functions $f$ combined with each of the two boundary functions. We used the standard finite elements PDE solver in MATLAB
({\tt https://it.mathworks.com/help/pde/ug/pde.pdemodel.solvepde.html}) to compute the solutions on a triangular mesh. 
The inner products $\langle\cdot,\cdot\rangle$ were computed by a simple trapezium rule on the same mesh.

Figure~\ref{FigureSchrodingerPosterior} shows plots of the misspecified posterior distribution for various sample
sizes and two values of $\q_0$, with a normal density of the same mean and of variance equal to the asymptotic value $V_{\q_0}^{-1}$.
Although the misspecified posterior is not normal, because the mean is not linear in $\q$ and the variance is not constant, the normal approximations
are very accurate, even at sample size 50, confirming the relevance of the asymptotic theory. 

To investigate the accuracy of the naive use of the credible sets defined by the misspecified posterior distribution, we computed
the inverse variance of the misspecified posterior distribution $V_{\q_0}$, 
the variance $J_{\q_0}$ of the score function of the misspecified model, and the inverse sandwich variance $J_{\q_0}/V_{\q_0}^2$. 
The computations follow Lemma~\ref{thm:locMinKLDiv} and Section~\ref{SectionSandwich}, similarly as in
Section~\ref{subsubsec: square integral operator}. 
Figure~\ref{FigureSchrodingerSandwich} shows that for small $\theta$ values (i.e. $\theta\leq 2$) these quantities are close to each other, while for higher $\theta$ the inverse sandwhich variance (slightly) dominates the inverse variance of the misspecified posterior distribution $V_{\q_0}$. These indicate,
that the uncertainty quantification of the misspecified posterior distribution is reasonably accurate (even if for larger $\theta$ values it is slightly conservative), at least for not too small sample sizes when the normal approximation is accurate.

Computing the mean vector and covariance matrix \eqref{EqMeanCovarianceModel}, which are required
to obtain the misspecified posterior distribution, is computationally intensive, as the PDE must be numerically solved
for a sample of latent processes $f$. However, the preceding shows the feasibility of this approach. In contrast,
computing the true posterior distribution seems infeasible, because of the hierarchical form of the model,
with an independent infinite-dimensional latent process attached to every observation. We performed
some numerical experiments to investigate a potential gain in precision from using the true posterior
distribution. This appeared modest at best, but we refreain from making precise claims in view of
the numerical instabilities in these experiments.

\begin{figure}
\centering
\includegraphics[width=5cm,height=3cm]{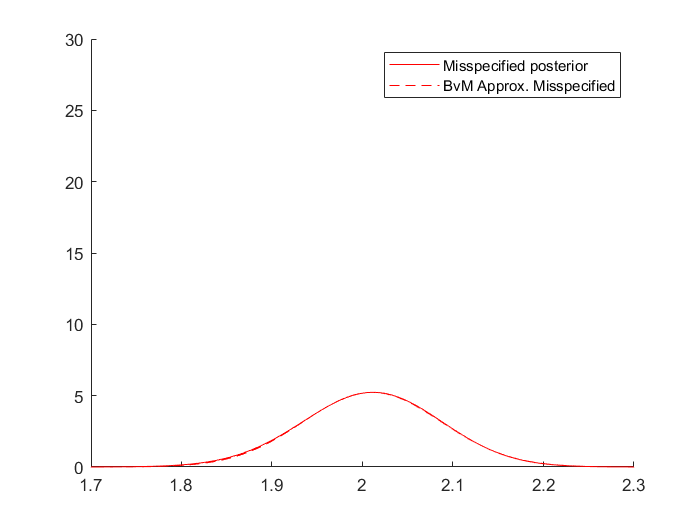}\hskip-4mm\includegraphics[width=5cm,height=3cm]{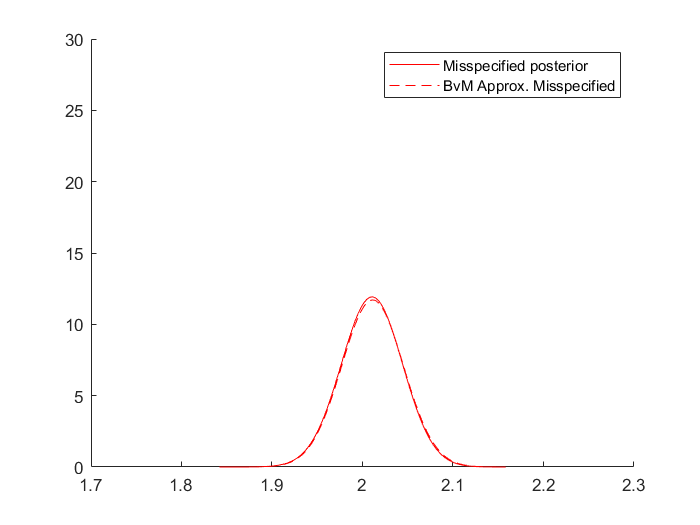}\hskip-4mm\includegraphics[width=5cm,height=3cm]{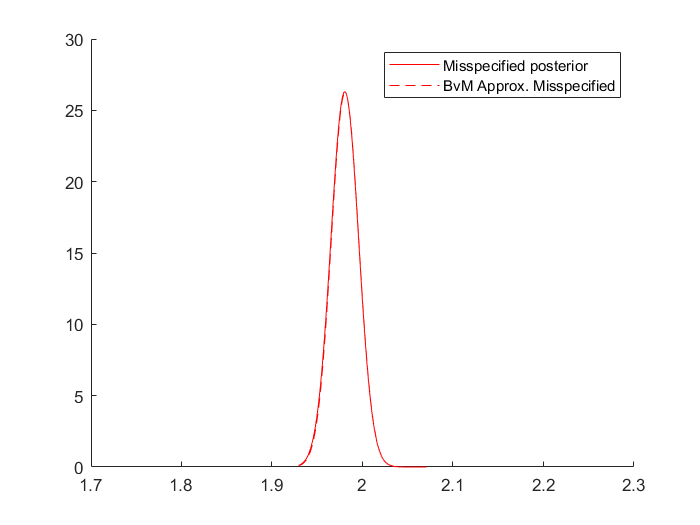}
\includegraphics[width=5cm,height=3cm]{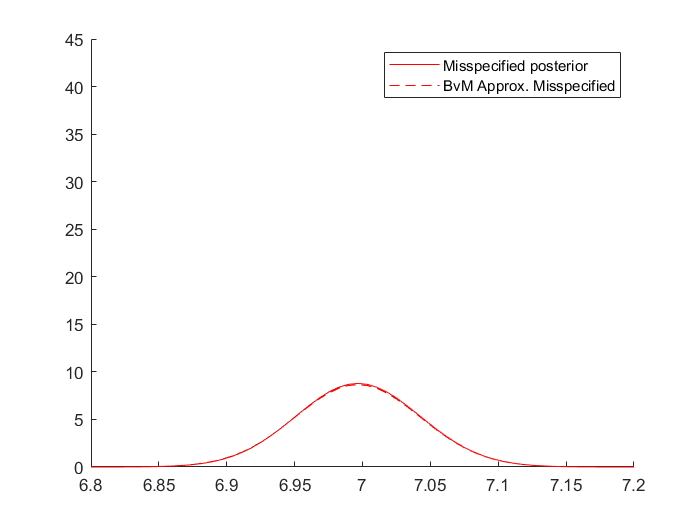}\hskip-4mm\includegraphics[width=5cm,height=3cm]{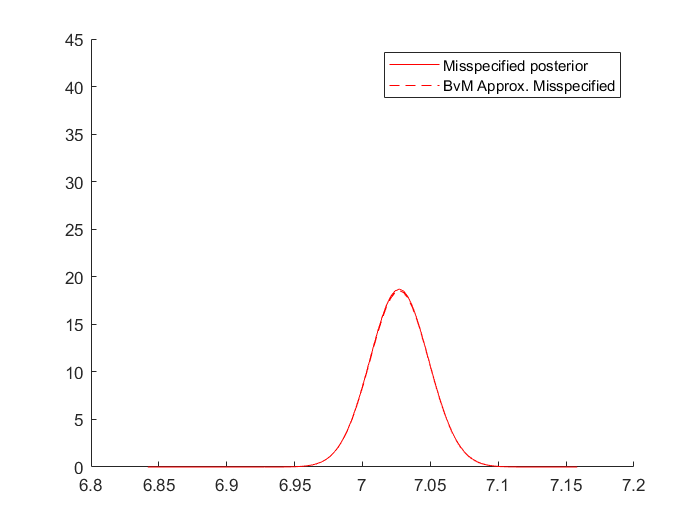}\hskip-4mm\includegraphics[width=5cm,height=3cm]{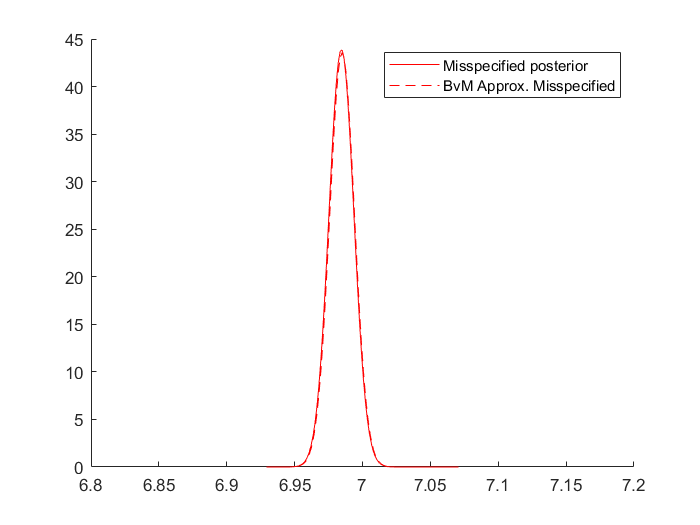}
\caption{Schr\"odinger equation observational model Equation~\eqref{def:Schrodinger:obs} with observations of dimension $p=3\times 3=9$. Each plot shows realisations for a single sample of observations of the misspecified (red) posterior density and its corresponding limiting Gaussian approximation (dashed, almost hidden). From top to bottom the true parameter increases from $\q_0=2$ to $\q_0=7$, 
and from left to right the sample size increases from $N=50, 200$ to $1250$.}
\label{FigureSchrodingerPosterior}
\end{figure}

\begin{figure}
\centering
\includegraphics[width=5cm,height=3cm]{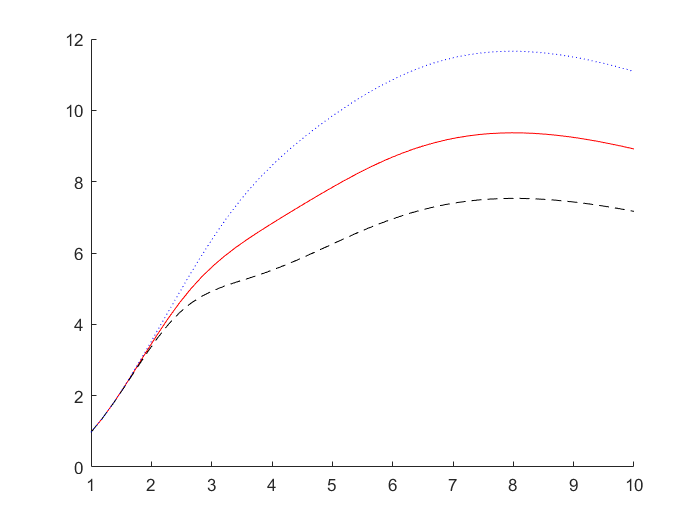}
\caption{Schr\"odinger equation observational model Equation~\eqref{def:Schrodinger:obs} with observations of dimension $p=3\times 3=9$. 
The inverse variance $V_{\q_0}$ of the misspecified posterior (red),
the variance $J_{\q_0}$ of the score function (black,dashed) and the inverse sandwich quantity $V_{\q_0}^2/J_{\q_0}$ (blue, dotted).}
\label{FigureSchrodingerSandwich}
\end{figure}

\section{Proof of the hierarchical Bernstein Von-Mises theorem}\label{sec:Proofs}
In this section, we give the proofs of Lemma~\ref{thm:locMinKLDiv} and Theorem~\ref{cor:BvM:hierarhical}.

\subsection{ Proof of Lemma~\ref{thm:locMinKLDiv}}\label{sec:prof:lem:variance}
For simplicity, we omit the index $i$ from the notation.

Instead of the normal distributions $Q_{\q,i}$ in \eqref{EqMisspecifiedQ} with the parametrised mean and covariance matrix, we shall consider
general normal distributions with free mean $\mu$ and covariance $\Sigma$, and show that 
the Kullback-Leibler divergence is uniquely minimised at $(\mu, \Sigma) = (\mu_{\q_{0}}, \Sigma_{\q_{0}})$.

Up to an additive constant minus twice the log-likelihood of the normal distribution with mean $\mu$ and covariance matrix $\Sigma$ is equal to $\log\det \Sigma+(X-\mu)\trans \Sigma^{-1}(X-\mu)$.
The expectation of this relative to an arbitrary distribution with $\expt X=\mu_{\q_{0}}$ and covariance matrix $\Cov X=\Sigma_{\q_{0}}$ is equal to
\begin{align*}
&\log \det\Sigma+ \expt \bigl((X - \mu_{\q_{0}})\trans \Sigma^{-1} (X - \mu_{\q_{0}}) 
+ (\mu-\mu_{\q_{0}})\trans \Sigma^{-1} (\mu-\mu_{\q_{0}}) \\
&\qquad\qquad\qquad=\log\det \Sigma+ \tr(\Sigma_{\q_{0}}\Sigma^{-1})+(\mu-\mu_{\q_{0}})\trans \Sigma^{-1}(\mu-\mu_{\q_{0}}).
\end{align*}
For every $\Sigma$, this is minimised at $\mu=\mu_{\q_{0}}$. For $B: = \Sigma_{\q_{0}}^{-1/2} \Sigma \Sigma_{\q_{0}}^{-1/2}$, its value at $\mu=\mu_{\q_{0}}$ can be written
$\log \det \Sigma_{\q_{0}} + \log \det B + \tr(B^{-1})= \sum_{j=1}^{p} (\log \lambda_j + \lambda_j^{-1})$,
for $(\lambda_j)_{j=1, \ldots, p}$ the eigenvalues of $B$.
This is minimised at $\lambda_j=1$, for $j=1,\ldots,p$. Equivalently, the minimiser in $B$
is the identity, corresponding to $\Sigma_{\q_{0}}$ as the minimiser with respect to $\Sigma$.

We now verify the formula for the Hessian at $\q = \q_{0}$.
The second derivative of the Kullback-Leibler divergence with respect to $\q_l$ and $\q_k$ is given by
\begin{align*}
&\frac{d^2}{d \q_l d \q_k} P_0 \log(p_0/p_\q)|_{\q=\q_{0}} = -\frac{1}{2}\det(\Sigma_{\q_{0}})^{-2}\Bigl(\frac{d}{d \q_l} \det(\Sigma_{\q_{0}})\Bigr)\Bigl(\frac{d}{d \q_k} \det(\Sigma_{\q_{0}})\Bigr) \\
&\quad+ \frac{1}{2}\det(\Sigma_{\q_{0}})^{-1}\frac{d^2}{d \q_l d \q_k} \det(\Sigma_{\q_{0}}) 
+ \Bigl(\frac{d}{d \q_l} \mu_{\q_{0}}\Bigr)\trans \Sigma_{\q_{0}}^{-1}\Bigl(\frac{d}{d \q_k} \mu_{\q_{0}}\Bigr) 
+ \frac{1}{2}\sum_{i,j} \Sigma_{\q_{0}, i, j}\frac{d^2}{d \q_l d \q_k} \Sigma_{\q_{0}, i, j}^{-1}.
\end{align*}
Combining this with Lemmas~\ref{lemma:JacobForm} and~\ref{lemma:secDerMat}, we see that
\begin{align*}
&\frac{d^2}{d\q_l d\q_k} P_0 \log(p_{0}/p_{\q})|_{\q = \q_{0}} - \Bigl(\frac{d}{d \q_l} \mu_{\q_{0}}\Bigr)\trans \Sigma_{\q_{0}}^{-1}\Bigl(\frac{d}{d \q_k} \mu_{\q_{0}}\Bigr)\\
&\quad= \frac{1}{2}\tr\Bigl(\Sigma_{\q_{0}}^{-1}\frac{d^2}{d\q_l d\q_k} \Sigma_{\q_{0}}\Bigr) - \frac{1}{2}\tr\Bigl(\Sigma_{\q_{0}}^{-1} \Bigl(\frac{d}{d\q_l}\Sigma_{\q_{0}}\Bigr) \Sigma_{\q_{0}}^{-1} \Bigl(\frac{d}{d\q_k} \Sigma_{\q_{0}}\Bigr) \Bigr) 
+ \frac{1}{2}\tr\Bigl(\Sigma_{\q_{0}} \frac{d^2}{d\q_l d\q_k} \Sigma_{\q_{0}}^{-1}\Bigr).
\end{align*}
By applying Equation~\eqref{eq:derivInvMat} twice, we obtain
\begin{align*}
\frac{d^2}{d\q_l d\q_k} \Sigma_{\q}^{-1}
&= \Sigma_{\q}^{-1} \Bigl(\frac{d}{d\q_l}\Sigma_{\q}\Bigr) \Sigma_{\q}^{-1} \Bigl(\frac{d}{d\q_k} \Sigma_{\q}\Bigr) \Sigma_{\q}^{-1} - \Sigma_{\q}^{-1} \Bigl(\frac{d^2}{d\q_l d\q_k}\Sigma_{\q}\Bigr) \Sigma_{\q}^{-1} \\
&\qquad\qquad\qquad
+ \Sigma_{\q}^{-1} \Bigl(\frac{d}{d\q_k} \Sigma_{\q}\Bigr) \Sigma_{\q}^{-1} \Bigl(\frac{d}{d\q_l}\Sigma_{\q}\Bigr) \Sigma_{\q}^{-1}.
\end{align*}
Plugging this expression into the formula in the second last display results in
\begin{align*}
&\frac{d^2}{d\q_l d\q_k} P_0 \log(p_{0} / p_{\q})|_{\q = \q_{0}}
= \frac{1}{2} \tr\Bigl( \Sigma_{\q_{0}}^{-1} \Bigl(\frac{d}{d\q_l} \Sigma_{\q_{0}}\Bigr) \Sigma_{\q_{0}}^{-1} \Bigl(\frac{d}{d\q_k} \Sigma_{\q_{0}}\Bigr)\Bigr)+ \Bigl(\frac{d}{d \q_l} \mu_{\q_{0}}\Bigr)\trans \Sigma_{\q_{0}}^{-1}\Bigl(\frac{d}{d \q_k} \mu_{\q_{0}}\Bigr).\nonumber
\end{align*}
This in turn implies the formula in Equation~\eqref{eq:Vstar} for $V_*$.

Finally note that Equation~\eqref{eq:Vstar} can be rewritten in the form
\begin{align*}
\frac{1}{2}\sum_{1\leq k,l \leq p} (A_{1, kl},\ \ldots,\ A_{p,kl})(A_{1, kl},\ \ldots,\ A_{p,kl})\trans + [v_1\trans , \ldots, v_p\trans ] [v_1\trans , \ldots, v_p\trans]\trans ,
\end{align*}
where $A_l=(A_{l,kl})_{k,l=1,\ldots,p}$.
This expression is the sum of Gram matrices and hence is positive semi-definite.
If the vectors $v_j$, $j=1,\ldots,d$ are linearly independent, then the second Gram matrix is positive definite, which implies the positive definiteness of $V_*$.

\subsection{Proof of Theorem~\ref{cor:BvM:hierarhical}}\label{sec:proof:cor}
Below we verify Assumptions~\ref{assum:BvM theorem}, with $\q_N^*=\q_{0}$ and
$V_{\q_N^*, N}$ equal to the Hessian of the Kullback-Leibler divergence.
Then the theorem is implied by Proposition~\ref{thm:BvM direct proof} and Lemma~\ref{thm:locMinKLDiv}.

\textit{Assumption~\ref{assum: convergence thetan}.} 
The point $\q_N^*=\q_{0}$, which minimises the 
Kullback-Leibler divergence between $Q_\q^{(N)}$ and $P_0^{(N)}$ by Lemma~\ref{thm:locMinKLDiv},
is independent of $N$ and is interior to $\Theta$ by assumption.

\textit{Assumption~\ref{assum: posit def V}.}
In case (a) of the theorem, where the mean and covariance functions do not depend on $i$, 
the average Hessian is fixed and given by Equation~\eqref{eq:Vstar}.
In case (b) the average Hessians converge by assumption.
In both cases, the (limiting) Hessian $V_{\q_{0}}$ is positive definite by assumption.

\textit{Assumption~\ref{assum: regular likel}.} 
Because the Kullback-Leibler divergence is minimised at $\q =\q_{0}$, its gradient vanishes at this point.
This implies that $\nabla l^{(N)}(\q_{0})$ has mean zero.
The random vector $\nabla l^{(N)}(\q_{0})$ is a sum of independent variables, each a polynomial of degree $2$ in the observations $X_{i,k}$, $k=1,\ldots,p$ .
Therefore $\var_{P_0}\bigl(N^{-1/2}\nabla l^{(N)}(\q_{0})\bigr)\lesssim N^{-1}\sum_{i=1}^N P_{0, i}\|X_i\|^4\lesssim 1$, by assumption.
Thus the sequence of variables $N^{-1/2}\nabla l^{(N)}(\q_{0})$ have mean zero and bounded  variances
and hence is bounded in probability.

\textit{Assumption~\ref{assum:likel max}.} 
For case (a) of the theorem we show below in Lemma~\ref{LemMLEBounded} that the sequence of induced maximum likelihood
estimators of the mean vector $\m_\q$ and inverse covariance $\Sigma_\q^{-1}$ are bounded in probability. 
In case (b) of the theorem the maximum likelihood estimators of $\q$ themselves are bounded in probability by assumption,
and then so are the mean vectors $\m_\q^i$ and inverse covariance matrices $(\Sigma_\q^i)^{-1}$ by their assumed
equi-continuity in $\q$.  Thus in both cases we can restrict the supremum in 
Assumption~\ref{assum:likel max} to a set $\Theta_1$ on which the means and inverse covariances are uniformly
bounded.

By the formula for the expected log Gaussian likelihood given in the proof of Lemma~\ref{thm:locMinKLDiv}, 
the expected value $2N^{-1}P_0\bigl(l^{(N)}(\q)-l^{(N)}(\q_{0})\bigr)$ is equal to minus the average given in 
Equation~\eqref{EqAsymptoticIdentifiability}.
In case (a) of the theorem these averages are fixed in $N$ and a continuous function of $(\m,\Sigma^{-1}):=(\m_\q,\Sigma_\q^{-1})$.
By assumption and construction, there exist $c,M>0$ such that the set $\{(\m_\q,\Sigma_\q^{-1}): \|\q-\q_0\|\ge\d, \q\in\Theta_1\}$ is contained
in the set $\{(\m,\Sigma^{-1}): c\le \|\m-\m_0\|\le M, c\le \|\Sigma^{-1}-\Sigma_0^{-1}\|\le M\}$, which is compact. The expression in 
Equation~\eqref{EqAsymptoticIdentifiability} is nonzero on this set and hence bounded away from zero.
It follows that the expected value $2N^{-1}P_0\bigl(l^{(N)}(\q)-l^{(N)}(\q_{0})\bigr)$ 
is bounded away from zero on $\{\q\in\Theta_1: \|\q-\q_0\|\ge\d\}$.
In case (b) of the theorem the set $\Theta_1$ from the preceding paragraph can be chosen compact 
and hence $\{\q\in\Theta_1: \|\q-\q_0\|\ge\d\}$ is contained in $\{\q\in\Theta: M\ge\|\q-\q_0\|\ge\d\}$, for some $M$.
In view of assumption~\eqref{EqAsymptoticIdentifiability}, again 
$2N^{-1}P_0\bigl(l^{(N)}(\q)-l^{(N)}(\q_{0})\bigr)$ is bounded away from zero on the set $\{\q\in\Theta_1: \|\q-\q_0\|\ge\d\}$.

Therefore, in both cases, for the verification of Assumption~\ref{assum:likel max}, it suffices to show that the sequence
$\sup_{\q\in\Theta_1} N^{-1} \bigl| \bigl( l^{(N)}(\q) - l^{(N)}(\q_{0})\bigr) - P_0 \bigl(l^{(N)}(\q) - l^{(N)}(\q_{0})\bigr) \bigr|$ tends to zero in probability.

We prove this using a bracketing argument, as given in Lemma~\ref{lemma:maximum likelihood non-iid}.
The Gaussian log-likelihood can be written in exponential family form as
\begin{align}\label{def:exp:fam}
l^{(N)}(\q)=c_N(\q)-\sum_{i=1}^N \sum_{k=1}^K T_{k}(X_i)g_{i,k}(\q) 
\end{align}
with sufficient statistics and natural parameter vectors given by
\begin{align}\label{eq:exp:fam:Gauss}
T_i = \begin{pmatrix} 
X_i\\ \vec\big( X_i X_i\trans \big) 
\end{pmatrix}\qquad\text{and} \qquad
g_i(\q) = \begin{pmatrix}
(\Sigma_\q^{\bi})^{-1}\mu_\q^{\bi}\\ -\frac{1}{2}\vec\big((\Sigma_\q^{\bi})^{-1}\big) 
\end{pmatrix}.
\end{align}
Here $\vec(A)$ denotes a column vector constructed from the elements of a matrix $A$ (in an arbitrary, but fixed way).
By assumption the functions $\q\mapsto (\Sigma_\q^{\bi})^{-1}$ and $\q\mapsto \mu_\q^{\bi}$ are twice differentiable with equicontinuous second derivatives.
The same holds for the functions $\q\mapsto g_{i,k}(\q)$, $i=1,\ldots, N$ and $k=1,\ldots,K$.

It suffices to prove that $\sup_\q N^{-1} \bigl| \sum_{i=1}^N \bigl( T_{i,k} - P_{0}T_{i,k}) \bigr) \bigl(g_{i,k}(\q) -g_{i,k}(\q_{0})\bigr)\bigr|$ tends to zero in probability, for every fixed $k$.
For a given covering $\Theta_1=\cup_j B_j$ by balls of radius $\delta$, consider the brackets
\begin{align*}
\mcall_{i,j} 
&= T_{i,k}^+\,\inf_{\q\in B_j}\bigl(g_{i,k}(\q)-g_{i,k}(\q_0)\bigr)-T_{i,k}^-\,\sup_{\q\in B_j}\bigl(g_{i,k}(\q)-g_{i,k}(\q_0)\bigr),\\
\mcalu_{i,j}
&= T_{i,k}^+\,\sup_{\q\in B_j}\bigl(g_{i,k}(\q)-g_{i,k}(\q_0)\bigr)-T_{i,k}^-\,\inf_{\q\in B_j}\bigl(g_{i,k}(\q)-g_{i,k}(\q_0)\bigr).
\end{align*}
We have $\mcall_{i,j} \leq T_{i,k} \bigl(g_{i,k}(\q) -g_{i,k}(\q_{0})\bigr)\leq \mcalu_{i,j}$, for all $\q \in B_j$ and $i \in \NN$, and
\begin{align*}
|\mcalu_{i,j} - \mcall_{i,j}| \leq |T_{i,k}|\, \Bigl|\sup_{\q\in B_j} g_{i,k}(\q) - \inf_{\q\in B_j} g_{i,k}(\q)\Bigr|.
\end{align*}
Because the functions $g_{i,k}$ are bounded on $\Theta_1$, for every given $\epsilon > 0$ there exists 
a finite cover $\Theta_1=\cup_j B_j$ such that variation of the vectors $g_{i,k}(\q)$ when $\q$ varies over one of the
$B_j$ is smaller than $\e$. 
For such a cover the right side of the display is bounded above by $|T_{i,k}|\epsilon$, for every $j$ and $i$.
It follows that $N^{-1}\sum_{i=1}^N P_0 |\mcalu_{i,j} - \mcall_{i,j}|
\le N^{-1}\sum_{i=1}^N P_0|T_{i,k}|\epsilon\lesssim \sup_i P_0\|X_i\|^4\epsilon$.
Furthermore, since $\var T^-\vee \var T^+\le \var T$ and $\var(S+T)\le 2\var S+2\var T$,
for any random variables $S, T$, we have
\begin{align*}
&\frac1{N^2}\sum_{i=1}^N \bigl(\var_{P_0} \mcall_{i,j}(X_i) + \var_{P_0}\mcalu_{i,j}(X_i) \bigr) 
\le\frac4{N^2} \sum_{i=1}^N \var_{P_0} T_{i,k} \max_j\sup_{\q \in B_j}\bigl|g_{i,k}(\q) -g_{i,k}(\q_{0})\bigr|^2,
\end{align*}
which tends to zero as $N \to \infty$.

\textit{Assumption~\ref{assum:Rn conv}.} A Taylor expansion of $l^{(N)}(\q)$ around $\q^*$ gives
\begin{align*}
&l^{(N)}(\q) - l^{(N)}(\q^*) 
= \nabla l^{(N)}(\q^*)\trans (\q - \q^*) + \frac{1}{2}(\q - \q^*)\trans \sum_{i=1}^N \sum_{k=1}^K H^{g_{i, k}}_{\q^*}(\q) T_{i, k} (\q - \q^*),
\end{align*}
where $H^{g_{i, k}}_{\q^*}(\q)$ are symmetric matrices with value at index $(r,s)$ given by
\begin{align}\label{eq:perturbed Hessian}
H^{g_{i, k}}_{\q^*, r, s}(\q) = 2\int_0^1 (1-t) \frac{\partial^2}{\partial \q_r \partial \q_s} g_{i, k}(\q^* + t(\q - \q^*)) \,dt.
\end{align}
Thus the Hessian of $N^{-1}l^{(N)}(\q)$ at $\q^*$ is given by
$-V_{\q^*, N} = \frac{1}{N}\sum_{i=1}^N \sum_{k=1}^K H_{\q^*}^{g_{i, k}}(\q^*) P_{0, i}(T_{i, k} )$,
and we can set 
\begin{align}\label{eq:Rn:exp}
\frac{1}{N}R_N(\q) = \frac{1}{N}\sum_{i=1}^N \biggl( \sum_{k=1}^K (H^{g_{i, k}}_{\q^*}(\q^*) P_{0, i} T_{i, k} - H^{g_{i, k}}_{\q^*}(\q) T_{i, k})\biggr).
\end{align}
By the triangle inequality this is bounded in absolute value above by
\begin{equation*}
\frac{1}{N} \Bigl\|\sum_{i=1}^N \sum_{k=1}^KH^{g_{i, k}}_{\q^*}(\q^*)(T_{i, k}- P_{0, i} T_{i, k})\Bigr\| 
+\max_{i,k} \sup_{\|\q - \q^*\| \leq \delta}\bigl|H^{g_{i, k}}_{\q^*}(\q^*)-H^{g_{i, k}}_{\q^*}(\q) \bigr|
\frac{1}{N} \sum_{i=1}^N \sum_{k=1}^KP_{0, i} |T_{i, k}|.
\end{equation*}
The first term is an average of independent random variables with mean zero and bounded variances, and hence this term tends to zero in probability.
The second term can be made arbitrarily small by choice of $\delta$, by the equicontinuity of the functions $H^{g_{i, k}}_{\q^*}$.

\textit{Assumption~\ref{assum:prior cont}.} This is true by assumption.

\begin{lemma}
\label{LemMLEBounded}
Under the conditions of Theorem~\ref{cor:BvM:hierarhical}(a), the vectors $\m_\q^\bi$ and matrices $\Sigma_\q^\bi$ and
 $(\Sigma_\q^\bi)^{-1}$ evaluated at the maximum likelihood estimator of $\q$ are bounded in probability.
\end{lemma}

\begin{proof}
Because $N^{-1}l^{(N)}(\q_0)$ is bounded in probability, it suffices to prove that for any given positive constants $C,\h$ 
there exist constants $K$ and $0<l\le L$ such that 
$$\liminf_{N\ra\infty}P_0^{(N)}\Bigl( \sup_{\q: \|\m_\q\|>K\text{ or } \l_{\min}(\Sigma_\q)<l\text{ or } \l_{\max}(\Sigma_\q)>L}\frac1N l^{(N)}(\q)< -C\Bigr)\ge 1-\h.$$
Here $\l_{\min}(\Sigma)$ and $\l_{\max}(\Sigma)$ denote the smallest and largest eigenvalues of a positive-definite
matrix $\Sigma$, and we drop the superscript $\bi$. The scaled log-likelihood can be written in the form
$$\frac1N l^{(N)}(\q)=-\frac12 \Bigl[\log \det \Sigma_\q+\tr\Bigl(\Sigma_\q^{-1}\bigl(S_N+(\bar X_N-\m_\q)(\bar X_N-\m_\q)^T\bigr)\Bigr)\Bigr].$$
for $\bar X_N$ the sample mean and $S_N=\frac 1N\sum_{i=1}^N (X_i-\bar X_N)(X_i-\bar X_N)^T$ the sample covariance matrix.
We shall show that with probability arbitrarily close to 1 the expression in square brackets becomes arbitrarily large
when $\|\m_\q\|>K$ or $\l_{\min}(\Sigma_\q)<l$ or $\l_{\max}(\Sigma_\q)>L$, for sufficiently large $K,L$ and small $l>0$.
For simplicity of notation, we drop the subscript $\q$ from $\Sigma_\q$ and $\m_\q$.

The expression in square brackets is bounded below by $\log \det \Sigma+\tr\bigl(\Sigma^{-1}S_N\bigr)$.
The sample covariance matrix converges almost surely to $\Sigma_{\q_0}$, which is invertible by assumption. It follows
that with high probability it is bounded below by a constant $c>0$ times the identity matrix. Hence the expression
in square brackets is with high probability bounded below by 
$\log \det \Sigma+c\tr\bigl(\Sigma^{-1}\bigr)=\sum_{j=1}^p\bigl[\log \l_j+c/\l_j\bigr]$, for $\l_j$ the eigenvalues
of $\Sigma$. Since $\inf_{\l>0}(\log \l+c/\l)\ge \log c+1$,
this is bounded below by $(p-1)(\log c+1)+\log \l_j+c/\l_j$, for any eigenvalue $\l_j$, and tends to infinity
both when $\l_j\ra0$ and when $\l_j\ra \infty$. 

The expression in square brackets is also bounded below by
$\log \det \Sigma+\tr\bigl(\Sigma^{-1}(\bar X_N-\m)(\bar X_N-\m)^T\bigr)$. 
For $0< l\le l_{\min}(\Sigma)\le\l_{\max}(\Sigma)\le L$, the matrix $\Sigma^{-1}$ is bounded
below by $\min_j \l_j^{-1}$ times the identity matrix, and the expression 
is further bounded below by 
$p\log l+L^{-1}\|\bar X_N-\m\|^2$.
If $\|\m\|\ge K$, then this is bounded below by $p\log l+L^{-1}\bigl(K^2-\|\m_0\|^2-O_P(1)\bigr)$.

The lemma follows upon combining the results of the preceding two paragraphs.
\end{proof}

\begin{lemma}\label{lemma:maximum likelihood non-iid}
For $i\in \NN$ and $\q\in\Theta$, let $f_{i,\q}: \RR^d \to \RR$ be a measurable function.
Assume that for every $\epsilon > 0$, there exist functions $\mcall_{i,j}, \mcalu_{i,j}: \RR^d \to \RR$, for $j=1, \ldots, J_{\epsilon}$ and $i \in \NN$ such that for every $\q \in \Theta$ there exists a $j \in \{1, \ldots, J_{\epsilon}\}$ such that $\mcall_{i,j} \leq f_{i, \q}\leq \mcalu_{i,j}$ for every $i \in \NN$, and such that 
\begin{align*}
\limsup_{N\to\infty} \frac{1}{N} \sum_{i=1}^N \expt \big(\mcalu_{i,j}(X_i) - \mcall_{i,j}(X_i)\big) &\leq \epsilon \\
\lim_{N\to\infty} N^{-2} \sum_{i=1}^N \frac{1}{N} \sum_{i=1}^N \bigl(\var \mcall_{i,j}(X_i) + \var\mcalu_{i,j}(X_i) \bigr) &= 0,
\end{align*}
for every $j \in \{1, \ldots, J_{\epsilon}\}$.
Then $N^{-1} \sup_{\q\in \Theta} \bigl|\sum_{i=1}^N \bigl(f_{i, \q}(X_i) - \expt f_{i, \q}(X_i)\bigr)\bigr| \to 0$ in probability.
\end{lemma}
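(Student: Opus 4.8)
The plan is to run the textbook bracketing argument for a uniform law of large numbers. The key structural feature is that, for each $\epsilon>0$, the hypothesis supplies only \emph{finitely many} bracketing pairs $\mathcalligra{l}_{i,j},\mathcalligra{u}_{i,j}$, $j=1,\dots,J_\epsilon$; this finiteness is exactly what lets one replace the supremum over the infinite index set $\Theta$ by a maximum over $j$, after which a termwise Chebyshev estimate finishes the proof.

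First I would fix $\epsilon>0$, take the corresponding brackets, and for each $\theta$ select an index $j=j(\theta)\in\{1,\dots,J_\epsilon\}$ with $\mathcalligra{l}_{i,j}\leq f_{i,\theta}\leq\mathcalligra{u}_{i,j}$ for every $i$. Since expectation preserves this ordering, one gets
\begin{align*}
\mathcalligra{l}_{i,j}(X_i)-\expt\mathcalligra{u}_{i,j}(X_i)\leq f_{i,\theta}(X_i)-\expt f_{i,\theta}(X_i)\leq\mathcalligra{u}_{i,j}(X_i)-\expt\mathcalligra{l}_{i,j}(X_i).
\end{align*}
Writing each outer expression as a centred bracket average plus a bracket ``width'' $\expt\bigl(\mathcalligra{u}_{i,j}(X_i)-\mathcalligra{l}_{i,j}(X_i)\bigr)$, summing over $i\leq N$, dividing by $N$, taking absolute values and then $\sup_\theta$ (which on the right enters only through $j(\theta)$, hence may be upgraded to $\max_{j\leq J_\epsilon}$), I obtain
\begin{align*}
\frac1N\sup_{\theta\in\Theta}\Bigl|\sum_{i=1}^N\bigl(f_{i,\theta}(X_i)-\expt f_{i,\theta}(X_i)\bigr)\Bigr|
&\leq \max_{j\leq J_\epsilon}\frac1N\Bigl|\sum_{i=1}^N\bigl(\mathcalligra{u}_{i,j}(X_i)-\expt\mathcalligra{u}_{i,j}(X_i)\bigr)\Bigr|\\
&\quad+\max_{j\leq J_\epsilon}\frac1N\Bigl|\sum_{i=1}^N\bigl(\mathcalligra{l}_{i,j}(X_i)-\expt\mathcalligra{l}_{i,j}(X_i)\bigr)\Bigr|\\
&\quad+\max_{j\leq J_\epsilon}\frac1N\sum_{i=1}^N\expt\bigl(\mathcalligra{u}_{i,j}(X_i)-\mathcalligra{l}_{i,j}(X_i)\bigr).
\end{align*}
(Measurability of the left-hand side is not an issue in the applications, where $\Theta$ is compact and $\theta\mapsto f_{i,\theta}$ continuous.)

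It then remains to dispose of the three terms on the right. For each fixed $j$, the variables $\mathcalligra{u}_{i,j}(X_i)$, $i\leq N$, are independent (the $X_i$ being independent), so $\var\bigl(N^{-1}\sum_{i=1}^N\mathcalligra{u}_{i,j}(X_i)\bigr)=N^{-2}\sum_{i=1}^N\var \mathcalligra{u}_{i,j}(X_i)\to0$ by the second bracketing condition, and likewise with $\mathcalligra{l}_{i,j}$ in place of $\mathcalligra{u}_{i,j}$; Chebyshev's inequality then forces each centred average to $0$ in probability, and a maximum over the finitely many indices $j\leq J_\epsilon$ of sequences converging to $0$ in probability again converges to $0$ in probability, so the first two terms vanish in probability. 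For the third term, finiteness of $J_\epsilon$ permits interchanging $\limsup_N$ with $\max_j$, whence its limit superior is at most $\epsilon$ by the first bracketing condition. Consequently $\limsup_N\prb\bigl(N^{-1}\sup_\theta|\sum_{i=1}^N(f_{i,\theta}(X_i)-\expt f_{i,\theta}(X_i))|>3\epsilon\bigr)=0$, and since $\epsilon>0$ was arbitrary the lemma follows.

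I do not anticipate a genuine obstacle; this is a standard uniform LLN. The only delicate point is that the bias term $N^{-1}\sum_i\expt(\mathcalligra{u}_{i,j(\theta)}-\mathcalligra{l}_{i,j(\theta)})$ must be controlled uniformly over the \emph{finitely many} brackets and not over all of $\Theta$ (the latter would be circular), which is precisely what the finite cover $j\in\{1,\dots,J_\epsilon\}$ and the first bracketing condition deliver, while independence of the $X_i$ is what converts the summed-variance assumption into a variance bound for the centred bracket averages.
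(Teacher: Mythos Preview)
Your proposal is correct and follows essentially the same bracketing argument as the paper: sandwich $f_{i,\theta}$ between $\mathcalligra{l}_{i,j}$ and $\mathcalligra{u}_{i,j}$, replace the supremum over $\Theta$ by a maximum over the finitely many $j\le J_\epsilon$, control the centred bracket averages by Chebyshev via the variance assumption, and bound the residual bracket-width term by $\epsilon$ using the first assumption. The paper presents the upper and lower bounds separately rather than your combined three-term inequality, but the content is identical.
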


\begin{proof}
Take arbitrary $\q \in \Theta$.
By assumption there exists a $j \in \{1, \ldots, J_{\epsilon}\}$ such that, for $N$ large enough,
\begin{align*}
\frac{1}{N}\sum_{i=1}^N \bigl[f_{i,\q}(X_i) - P_i f_{i,\q}\bigr]
&\leq \frac{1}{N}\sum_{i=1}^N \bigl(\mcalu_{i,j}(X_i) - P_i\mcalu_{i,j}\bigr) + 
\frac{1}{N}\sum_{i=1}^N P_i( \mcalu_{i,j} - \mcall_{i,j})\\
&\leq \frac{1}{N}\sum_{i=1}^N \bigl(\mcalu_{i,j}(X_i) - P_i\mcalu_{i,j}\bigr) + \epsilon.
\end{align*}
Hence the supremum over $\q$ of the left side is bounded above by
\begin{equation*}
\max_{j=1, \ldots, J_{\epsilon}} \frac{1}{N}\sum_{i=1}^N \bigl(\mcalu_{i,j}(X_i) - P_i\mcalu_{i,j}\bigr) + \epsilon.
\end{equation*}
By the assumption $\lim_{N\to\infty} N^{-2} \sum_{i=1}^N \var\mcalu_{i,j}(X_i) = 0$, this
tends in probability to $\epsilon$.
In combination with a similar argument using the lower brackets
$\mcall_{i,j}$, we find that the sequence given by 
$N^{-1} \sup_{\q\in \Theta} \bigl|\sum_{i=1}^N \bigl(f_{i,\q}(X_i)- \expt f_{i,\q}(X_i)\bigr)\bigr| $ 
is asymptotically bounded in probability by $\epsilon$.
This being true for every $\epsilon > 0$, gives the result.
\end{proof}

\section{Proofs for the applications}\label{sec:proof:examples}
In this section, we collect the proofs for the three examples discussed in Section~\ref{sec:examples}.
In each case we verify that the conditions of Theorem~\ref{cor:BvM:hierarhical} hold.

\subsection{Proof of Corollary~\ref{cor:examp1}}\label{sec:cor:examp1}
The mean function $\q\mapsto \mu_\q^{\bi}$ is given in Equation~\eqref{eq:mu square int op} with $z=z_i$,
while elementary (but cumbersome, see Section~\ref{sec:proof:square:int:var}) computations 
show that the covariance function can be written as, for  $j, j'\in\{0,1,2\}$,
\begin{equation}\label{eq:sig square int op}
\Sigma_{\q,j,j'}^{\bi} = \Lambda_{j, j'} + \sum_{k=0}^{j} \sum_{l=0}^{j'} a_{1,j,l} a_{1,j',k} (z_i^5b_{k, l} + \q^2 z_i^4 c_{k, l})
=:\Lambda_{j,j'} + A_{j,j'}z_i^5 + \q^2 z_i^4 B_{j,j'} ,
\end{equation}
where the numbers $a_{1,j,l}$ are the coefficients of the Legendre polynomials on $[0,1]$ and the numbers
$b_{k,l}$ and $c_{k,l}$ are given in \eqref{EqDefbc}. The matrices $A$ and $B$, defined by the preceding display, are
positive definite. Thus the matrices $\Sigma_\q$ are quadratic functions of $\q$, with $\Lambda\le \Sigma_\q\lesssim \Lambda+A+\q^2B$,
for bounded sets $\{z_i: i\in\NN\}$. Consequently, their eigenvalues are bounded away from zero and bounded above by a multiple of $1+\q^2$.

The limiting covariance matrix of the misspecified posterior, given in Equation~\eqref{eq:Vstar}, 
is completely determined by Equations~\eqref{eq:mu square int op} and \eqref{eq:sig square int op},
and can be computed to be (note that $\frac{d}{d\q} \mu_{\q}^\bi= \q z^{3/2}(1, \tfrac{1}{3}, 0, \ldots, 0)$)
\begin{align*}
V_*^{\bi} &= \frac{1}{2}\tr\biggl(\Bigl[\bigl(\Lambda + Az_{i}^5 + B\q_{0}^2 z_{i}^4\bigr)^{-1} 2\q_{0} B z_{i}^4\Bigr]^2\biggr) \\
&\qquad\qquad+ \q_{0} z_{i}^{3/2}(1, \tfrac{1}{3}, 0, \ldots, 0) \bigl(\Lambda + Az_{i}^5 + B \q_{0}^2 z_{i}^4\bigr)^{-1}\q_{0} z_{i}^{3/2}(1, \tfrac{1}{3}, 0, \ldots, 0)\trans.
\end{align*}
This can be written as $z_i^8 G_{\q_{0},1}(z_i) + z_i^3 G_{\q_{0},2}(z_i)$, for functions $G_{\q_{0}, 1}$ and $G_{\q_{0},2}$ 
that are bounded on compacta.
We immediately conclude that $V_*^{\bi} > 0$ if and only if $\q^*=\q_{0} \neq 0$.

Make the dependence of $\m_\q^i$ and $\Sigma_\q^i$ on $z_i$ explicit by writing them as
$\m_{\q,z_i}$ and $\Sigma_{\q,z_i}$. The maps $(\q, z) \mapsto \mu_{\q, z}$ and
$(\q, z) \mapsto \Sigma_{\q, z}$ are uniformly continuous.
Since $\Sigma_{\q, z}\geq \Lambda$, the same is true for  the function $(\q, z) \mapsto \Sigma_{\q, z}^{-1}$.
In the case where the sequence $(z_i)_{i\in\NN}$ is constant, the data are i.i.d.\ and therefore $\lim_{N\ra\infty} V_{\q^*, N} = V_* > 0$.
If $z_i\rightarrow0$, then $\lim_{z_i\ra0} V_*^{\bi} = 0$, and thus $\lim_{N\ra\infty} N^{-1}\sum_{i=1}^{N} V_*^{\bi} = 0$.

In order to verify \eqref{EqAsymptoticIdentifiability}, we first note that 
$\tr\bigl( \Sigma_{\q_{0}}^{\bi} \bigl( \Sigma_{\q}^{\bi} \bigr)^{-1} - I \bigr) - \log \det \bigl( \Sigma_{\q_{0}}^{\bi} \bigl( \Sigma_{\q}^{\bi} \bigr)^{-1}\bigr)$
is the Kullback-Leibler divergence  between two multivariate-normal distributions with covariances  $\Sigma_{\q}$ and $\Sigma_{\q_0}$ and hence is nonnegative.
Thus it suffices to show that
\begin{equation*}
\limsup_{N \to \infty} \sup_{\q: |\q - \q_0| \geq \delta} \frac{1}{N} \sum_{i = 1}^N (\mu_{\q}^{\bi} - \mu_{\q_0}^{\bi})\trans (\Sigma_{\q}^{\bi})^{-1} (\mu_{\q}^{\bi} - \mu_{\q_0}^{\bi}) > 0.
\end{equation*}
With $\lambda_1^{\bi}, \ldots, \lambda_d^{\bi}$ the eigenvalues of $\Sigma_{\q}^{\bi}$, the average in the left-hand side of the previous display is lower bounded by 
\begin{equation}\label{eq:lower bound EqAsymptoticIdentifiability}
\frac{1}{N} \sum_{i = 1}^N \min_{j = 1, \ldots, d} \frac1{\lambda_j^{\bi}} \|\mu_{\q}^{\bi} - \mu_{\q_0}^{\bi}\|^2\gtrsim 
\frac{1}{N} \sum_{i = 1}^N \frac1{1+\q^2}\frac{z_i^3(\q + \q_0)^2 (\q - \q_0)^2}{3},
\end{equation}
in view of Equation~\eqref{eq:mu square int op},  if the sequence $(z_i)_{i\in\NN}$ is bounded.
For $M\ge |\q - \q_0| \geq \delta$ this is bounded away from zero provided
$\limsup_{N \to \infty} N^{-1} \sum_{i = 1}^N z_i^3>0$, which is true by assumption.

Minus twice the scaled likelihood $N^{-1}l^{(N)}(\q)$ is equal to 
\begin{align*}
&\frac1N\sum_{i=1}^N\Bigl[\log\det\Sigma_\q^i+\tr\bigl((\Sigma_\q^i)^{-1}(X_i-\m_\q^i)(X_i-\m_\q^i)^T\bigr)\Bigr]
\ge \log \det\Lambda+\min_{j = 1, \ldots, d} \frac1{\lambda_j^{\bi}}\frac1N\sum_{i=1}^N\|X_i-\m_\q^i\|^2.
\end{align*}
Since $N^{-1}\sum_{i=1}^N\|X_i-\m_{\q_0}^i\|^2$ is bounded in probability, the right side
of the preceding display becomes arbitrarily large if $N^{-1}\sum_{i=1}^N\|\m_{\q_0}^i-\m_\q^i\|^2$ becomes arbitrarily large.
By \eqref{eq:lower bound EqAsymptoticIdentifiability} the latter is the case if $\|\q-\q_0\|\ra\infty$. This implies that 
the maximum likelihood estimator of $\q$ is bounded in probability.

We finish the proof by providing upper bounds for the fourth moments of the $X_i$.
By Jensen's inequality and Fubini's theorem, we have
\begin{align*}
P_{0}^{(N)} \langle \tau_{\q,z_i} f, e_{j}^{z_i}\rangle_{L^2[0,z]}^4
&\leq \int_0^{z_i} \int_0^t P_{0}^{(N)} (f(s) - \q)^8\, ds\, |e_{j}^{z_i}(t)|^4 \,dt< \infty,
\end{align*}
where in the last inequality we used that $f(s) \sim N(0,s)$, implying a finite eighth moment.

\subsection{Proof of Corollary ~\ref{cor:examp2}} 
\label{sec:cor:examp2}

By the Feynman-Kac formula (see Proposition~25 in \cite{Nickl18} or \cite{Freidlin}, Theorem~2.1 on page~127),
\begin{align}\label{eq: Feynman-Kac BvM}
u_{\q,f}(x) = \expt ^x \Bigl(g_{\q}(B_{\tau_{\O}}) e^{-\int_0^{\tau_{\O}} f(B_s) ds}\Bigr).
\end{align}
Here $B=(B_s: s\geq 0)$ denotes a $d$-dimensional Brownian motion, the superscript $x$ on
the expectation indicates that this has started at $x \in \O$, and $\tau_{\O}$ is its exit time from $\O$.
It is known that $x\mapsto \E^x\tau_\O$ is uniformly bounded (e.g.\ \cite{Nickl2023}, Lemma~A.4.1). For
a nonnegative function $f$ this gives the bounds 
$\inf_{y\in\partial \O} g_\q(y) \lesssim u_{\q,f}(x) \leq \sup_{y \in \partial \O} g_\q(y)$.
The assumed boundedness of $g_\q$ implies that the variable $X_{i,l}=\langle u_{\q,f},e_l\rangle_{L^2(\O)}+\g_{i,l}$ defined in 
\eqref{def:Schrodinger:obs} possesses a finite fourth moment.

Equation \eqref{def:Schrodinger:obs} and Fubini's theorem give
\begin{align*}
\m_{\q,l}&=\E_\q X_{i,l}=\int_{\O} \expt^x \Bigl( g_{\q}(B_{\tau_{\O}}) \expt_f e^{-\int_0^{\tau_{\O}} f(B_s) ds} \Bigr) e_l(x) \,dx,\\
\Sigma_{\q,l,l'}+\m_{\q,l}\m_{\q,l'}&=\E_\q X_{i,l}X_{i,l'}
=\int_{\O}\int_{\O} \expt^x \expt^{x'}\Bigl( g_{\q}(B_{\tau_{\O}}) g_\q(B'_{\tau_{\O}})\\
&\qquad\qquad\qquad\times\expt_f \bigl[e^{-\int_0^{\tau_{\O}} f(B_s) ds} e^{-\int_0^{\tau_{\O}'} f(B'_s) ds}\bigr]\Bigr) e_l(x)e_l(x') \,dx\,dx',
\end{align*}
where $B$ and $B'$ are independent Brownian motions, and the inner expectation $\expt_f$ 
is with respect to the random variable $f$ with law $G$ as in \eqref{def:Schrodinger:obs}. 
By differentiating under the integrals and expectations, we obtain similar expressions 
for the partial derivatives with respect to $\q$, to the same order as the map $\q \mapsto g_{\q}$ is (continuously) differentiable (and bounded).
In particular, the maps $\q \mapsto \mu_{\q}$ and $\q \mapsto \Sigma_\q$ are twice continuously differentiable.

The assumption that the vectors $\frac{d}{d\q_j}\mu_\q\in\RR^p$ are linearly independent,
for $j=1,\ldots,d$, implies that the vectors $v_j$, for $j=1,\ldots,d$,  in Equation~\eqref{eq:Vstar} are linearly independent.
Therefore, the second Gram matrix in Equation~\eqref{eq:Vstar} is positive definite, 
which implies the positive definiteness of $V_*$.

\subsection{Proof of Corollary~\ref{cor:examp3}} \label{sec:cor:examp3}
Define $X = (X_t)_{t\in[0,t_{\max}]}$ to be the solution to the stochastic differential equation defined in terms of $a,b$ and $c_\q$ 
and initial conditions $X_0 = x \in \O$ and let $\tau_\O$ be its exit time from $\O$ (see \cite{Feehan15}, Section~6), so
that the solution $u_{\q}: (0,t_{\max}) \times \O$ of Equation~\eqref{eq:parabolic} can be expressed by the 
Feynman-Kac formula as, with $\bar\t_\O=\t_\O\wedge t_{\max}$,
\begin{align}\label{eq: Feynman-Kac equation time-dependent}
u_{\q,f}(t,x) 
&= \expt ^{x} \Bigl[g_{\q}(\bar\tau_\O, X_{\bar\tau_\O})e^{-\int_t^{\bar\tau_{\O}} c_{\q}(X_s)\, ds}
+ \int_{t}^{\bar\tau_\O} f( X_{s})e^{-\int_t^{s} c_{\q}(X_v)\, dv}\,ds\Bigr].
\end{align}
Using this formula, we proceed by showing that the collections of functions $\{\q \mapsto \mu_{\q}^{\bi}: i \in \NN\}$ 
and $\{\q \mapsto \Sigma_{\q}^{\bi}: i \in \NN\}$ are equicontinuous.
Let $\q_1, \q_2 \in \Theta$ be given.
Note that
\begin{align*}
|\mu_{ \q_1, l}^{\bi} - \mu_{\q_2, l}^{\bi}|
&\leq \|e_l\|_{\infty} \expt _g \|u_{\q_1,f}(t_i, \cdot) - u_{\q_2,f}(t_i, \cdot)\|_{\infty}.
\end{align*}
The second factor on the right-hand side of the preceding display can be bounded from above by triangle inequality as 
\begin{align*}
\|u_{\q_1,f}(t_i, \cdot) - u_{\q_2,f}(t_i, \cdot)\|_{\infty}
&\leq \expt ^x |g_{\q_1}(\bar\tau_\O, X_{\bar\tau_\O})|
 \underbrace{\Big|e^{-\int_{t}^{\bar\tau_\O} c_{\q_1}(X_s)\, ds} - e^{-\int_{t}^{\bar\tau_\O} c_{\q_2}(X_s)\,ds}\Big|}_{=: \Delta_1}\\
&\qquad+ \expt ^x \Big|e^{-\int_{t}^{\bar\tau_\O} c_{\q_2}(X_s)\, ds}\Big|
\underbrace{|g_{\q_2}(\bar\tau_\O, X_{\bar\tau_\O}) - g_{\q_1}(\bar\tau_\O, X_{\bar\tau_\O})|}_{=: \Delta_2}\\
&\qquad+ \expt^x \! \int\limits_{t}^{\bar\tau_\O} |f(X_s)| \underbrace{\Big|e^{-\int_{t}^s c_{\q_1}(X_v)\,dv}
 - e^{-\int_{t}^s c_{\q_2}(X_v)\,dv}\Big|}_{=: \Delta_3}\ ds.
\end{align*}
In view of the bounds 
\begin{align*}
\Delta_1 &\leq t_{\max} \sup_{\q \in \Theta}\|\nabla_{\q} c_{\q}\|_{\infty}\|\q_1 - \q_2\|,\\
\Delta_2 &\leq \sup_{\q \in \Theta}\|\nabla_{\q} g_{\q}\|_{\infty}\|\q_1 - \q_2\|,\\
\Delta_3 &\leq t_{\max}\sup_{\q \in \Theta} \|\nabla_{\q} c_{\q}\|_{\infty} \|\q_1-\q_2\|,
\end{align*}
combined with the assumed bounds on $g_{\q}$ and $c_{\q}$, we find that
\begin{align}\label{eq: bound diff thetas}
&\|u_{\q_1,f}(t_i, \cdot) - u_{\q_2,f}(t_i, \cdot)\|_{\infty}
\leq t_{\max}\sup_{\q \in \Theta} \|g_{\q}\|_{\infty} \sup_{\q \in \Theta}\|\nabla_{\q} c_{\q}\|_{\infty}\|\q_1 - \q_2\|\nonumber \\
&\qquad\qquad+ \sup_{\q \in \Theta}\|\nabla_{\q} g_{\q}\|_{\infty}\|\q_1 - \q_2\| 
+ t_{\max}^2 \sup_{\q \in \Theta} \|f\|_{\infty} \sup_{\q \in \Theta} \|\nabla_{\q} c_{\q}\|_{\infty} \|\q_1 - \q_2\|.
\end{align}
We conclude that $\{\q \mapsto \mu_{\q,l,f}^{\bi}: i\in \N\}$ is equicontinuous.
Since the covariance term $\Sigma_{\q, l, k}^{\bi} = P_{0, i} X_{\q, i, l} X_{\q, i, k} - P_{0, i}X_{\q, i, l} P_{0, i} X_{\q, i, k}$, and the second term is equicontinuous, we only need to show that $\q \mapsto P_{0, i} X_{\q, i, k} X_{\q, i, l}$ is equicontinuous.
We have
\begin{align*}
&|P_{0, i} X_{\q_1, i, k} X_{\q_1, i, l} - P_{0, i} X_{\q_2, i, k} X_{\q_2, i, l}|\\
&\qquad\qquad\leq \|e_l\|_\infty \|e_k\|_{\infty} P_{0, i} \int_{\O} \int_{\O}\Big( |u_{\q_1,f}(t_i, x) - u_{\q_2,f}(t_i, x)| |u_{\q_1,f}(t_i, y)|\\
&\qquad\qquad\qquad\qquad\qquad+ |u_{\q_1,f}(t_i, y) - u_{\q_2,f}(t_i, y)| |u_{\q_2,f}(t_i, x)|\Big) \,dx \,dy.
\end{align*}
If we show that the first term of the integrand on the right-hand side is equicontinuous, the claim follows.
By Hölder's inequality, we bound the integral by
\begin{align*}
&\expt_G \int_{\O}\int_{\O}|u_{\q_1,f}(t_i, x) - u_{\q_2,f}(t_i, x)| |u_{\q_1,f}(t_i, y)|\,dx \,dy \\
&\qquad\qquad\qquad\leq \sqrt{\expt_G \int_{\O} |u_{\q_1,f}(t_i, x) - u_{\q_2,f}(t_i, x)| \,dx^2 \expt_G\Big( \int_{\O} |u_{\q_1,f}(t_i, y)| \,dy\Big)^2}.
\end{align*}
The second factor on the right-hand side is bounded by $\text{Vol}(\O)^2(\expt_G\|g_{\q_1}\|_{\infty} + t_{\max} \|f\|_{\infty})^2$.
The integrand in the first factor is bounded as
\begin{align*}
\expt_G|u_{\q_1,f}(t_i, x) - u_{\q_2,f}(t_i, x)| \leq C\|\q_1 - \q_2\|
\end{align*}
by Equation~\eqref{eq: bound diff thetas}, for a constant $C < \infty$ that does not depend on $i$.
We conclude that $\q \mapsto \Sigma_{\q, k, l}^{\bi}$ is also equicontinuous.
Similar reasoning shows that the map $(\q, t) \mapsto (\mu_{\q, t}, \Sigma_{\q, t})$ is continuous on $\Theta \times [0,t_{\max}]$ if $g_{\q}, f$ and $c_{\q}$ are continuous and bounded in $\q$.
This can be extended to differentiability in $\q$ up to the minimum order of differentiability of $g_{\q}$ and $c_{\q}$.

Similar to the proof in Section~\ref{sec:cor:examp1}, by the assumption on $\mu_{\q}$ it suffices to show that the eigenvalues of $\Sigma_{\q_0}^{\bi}$ are uniformly bounded in $i$.
We do this by showing that $|\eta_{\q_0}(f_i)|$ is uniformly bounded in $i$.
Due to Equation~\eqref{eq: Feynman-Kac equation time-dependent}, we can bound $|\eta_{\q_0}(f_i)|$ from above by 
\begin{equation*}
|\eta_{\q_0}(f_i)| \leq \sup_{j = 1, \ldots, p} \|e_j\|_{\infty} \biggl(\sup_{\substack{t \in (0, t_{\max}),\\ x \in \O}} g_{\q_0}(t, x) + t_{\max} \expt^{x} \|f\|_{\infty}\biggr) < \infty.
\end{equation*}
Since this is uniform in $i$, we conclude that the entries of $\Sigma_{\q_0}^{\bi}$ are uniformly bounded in $i$, and therefore its eigenvalues $\lambda_{1}^{\bi}, \ldots, \lambda_d^{\bi}$ are also uniformly bounded in $i$.

Finally, we note that $P_{0, i}\|X_i\|^4\lesssim \expt_G\|u(t_i,\cdot)\|_\infty^4+ P_{0, i}\|\gamma_i\|^4$.
The first term is bounded by a multiple of $t_{\max}^4\big(\|g_{\q}\|_\infty^4+\expt_G\|f\|^4_\infty\big)<\infty$, while the boundedness of the second term follows from the Gaussianity of $\gamma_i$.

\section{Technical lemmas}\label{sec:Technical Lemmas}
In this section, we collect technical lemmas used to prove our main results.
We start by recalling Jacobi's formula (see for instance \cite{Magnus19}), and extending this to the second derivative of the determinant.

\begin{lemma}[Jacobi's formula]\label{lemma:JacobForm}
Let $\Sigma: \Theta \to \RR^{n \times n}$ with $\Theta\subset\RR^d$ be a coordinate-wise differentiable map, such that $\Sigma_{\q}$ is invertible for each $\q \in \Theta$.
Then the partial derivative of the determinant for every $j=1, \ldots,d$ is given by
\begin{align*}
\frac{d}{d\q_j} \det \Sigma_{\q} = \det(\Sigma_{\q}) \tr\Bigl(\Sigma_{\q}^{-1} \frac{d}{d\q_j} \Sigma_{\q}\Bigr).
\end{align*}
\end{lemma}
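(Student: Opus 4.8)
The plan is to reduce the identity to the multilinear (Laplace) expansion of the determinant together with the chain rule. Fix $\theta\in\Theta$ and an index $j$, and view the determinant as a polynomial map $\det\colon\mathbb{R}^{n\times n}\to\mathbb{R}$ in the $n^2$ entries $M=(M_{ik})$. For any fixed column $k$ the cofactor expansion reads $\det M=\sum_{i=1}^n M_{ik}\,\mathrm{cof}_{ik}(M)$, where $\mathrm{cof}_{ik}(M)=(-1)^{i+k}\det M^{(ik)}$ and $M^{(ik)}$ denotes $M$ with row $i$ and column $k$ removed. Crucially $\mathrm{cof}_{ik}(M)$ involves no entry of column $k$, so differentiating the expansion in the single variable $M_{ik}$ gives $\partial(\det M)/\partial M_{ik}=\mathrm{cof}_{ik}(M)$ for every pair $(i,k)$.

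Next I would rewrite the cofactor in terms of the inverse. By definition of the adjugate, $\mathrm{adj}(M)_{ki}=\mathrm{cof}_{ik}(M)$, and from the identity $M\,\mathrm{adj}(M)=(\det M)\,I$ together with the invertibility of $\Sigma_\theta$ assumed in the lemma, we obtain $\mathrm{adj}(\Sigma_\theta)=\det(\Sigma_\theta)\,\Sigma_\theta^{-1}$, i.e.\ $\mathrm{cof}_{ik}(\Sigma_\theta)=\det(\Sigma_\theta)\,(\Sigma_\theta^{-1})_{ki}$. Since $\det$ is a polynomial (hence $C^\infty$) in the entries and each coordinate map $\theta\mapsto\Sigma_{\theta,ik}$ is differentiable by hypothesis, the chain rule applied to $\theta\mapsto\Sigma_\theta\mapsto\det\Sigma_\theta$ yields
\begin{align*}
\frac{d}{d\theta_j}\det\Sigma_\theta
&=\sum_{i,k=1}^n\mathrm{cof}_{ik}(\Sigma_\theta)\,\frac{d\Sigma_{\theta,ik}}{d\theta_j}\\
&=\det(\Sigma_\theta)\sum_{i,k=1}^n(\Sigma_\theta^{-1})_{ki}\,\Bigl(\tfrac{d}{d\theta_j}\Sigma_\theta\Bigr)_{ik}\\
&=\det(\Sigma_\theta)\,\tr\Bigl(\Sigma_\theta^{-1}\tfrac{d}{d\theta_j}\Sigma_\theta\Bigr),
\end{align*}
where the last step is just the definition $\tr(AB)=\sum_{i,k}A_{ik}B_{ki}$ with $A=\Sigma_\theta^{-1}$ and $B=\tfrac{d}{d\theta_j}\Sigma_\theta$. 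This is the asserted formula.

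There is no genuine obstacle: the result is classical (see \cite{Magnus19}) and the only point meriting a word of justification is that the cofactor $\mathrm{cof}_{ik}$ does not depend on the entries of column $k$, which is precisely what identifies it with $\partial\det/\partial M_{ik}$; everything else is the chain rule and the adjugate identity. As an alternative one could note that on the open subset of $\Theta$ where $\Sigma_\theta$ has no negative eigenvalues one has $\det\Sigma_\theta=\exp\bigl(\tr\log\Sigma_\theta\bigr)$ and differentiate directly, but the cofactor argument above imposes no spectral restriction and extends verbatim to the second-derivative computation of $\det$ used later in this section.
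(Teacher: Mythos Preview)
Your proof is correct and is the standard cofactor/adjugate argument for Jacobi's formula. Note, however, that the paper does not actually prove this lemma: it is stated as a classical result with a reference to \cite{Magnus19} and no proof environment follows. So there is no ``paper's own proof'' to compare against; your argument simply supplies what the paper takes for granted, and it does so cleanly.
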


\begin{lemma}\label{lemma:secDerMat}
Let $\Sigma : \Theta \to \RR^{n\times n}$ be as in Lemma~\ref{lemma:JacobForm}, where each coordinate is twice 
differentiable.
Then the second derivative of $\det(\Sigma_{\q})$ with respect to $\q_l$ and $\q_k$ is given by
\begin{align*}
\frac{d^2}{d\q_l d\q_k} \det(\Sigma_{\q})
&= \det(\Sigma_{\q}) \biggl(\tr\Big(\Sigma_{\q}^{-1}\frac{d}{d\q_l}\Sigma_{\q}\Big)\tr\Big(\Sigma_{\q}^{-1}\frac{d}{d\q_k}\Sigma_{\q}\Big)\\ 
&\qquad\quad\ - \tr\Bigl(\Sigma_{\q}^{-1}\Big(\frac{d}{d\q_l} \Sigma_{\q}\Big) \Sigma_{\q}^{-1}\frac{d}{d\q_k} \Sigma_{\q}\Bigr) + \tr\Big(\Sigma_{\q}^{-1} \frac{d^2}{d\q_l d\q_k} \Sigma_{\q}\Big)\biggr).
\end{align*}
\end{lemma}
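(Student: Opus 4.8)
The plan is simply to differentiate Jacobi's formula (Lemma~\ref{lemma:JacobForm}) a second time. Abbreviate $\Sigma_j := \frac{d}{d\theta_j}\Sigma_\theta$. Lemma~\ref{lemma:JacobForm} gives $\frac{d}{d\theta_l}\det\Sigma_\theta = \det(\Sigma_\theta)\,\tr\bigl(\Sigma_\theta^{-1}\Sigma_l\bigr)$, and applying $\frac{d}{d\theta_k}$ to both sides via the product rule yields
\begin{align*}
\frac{d^2}{d\theta_l d\theta_k}\det\Sigma_\theta
= \Bigl(\frac{d}{d\theta_k}\det\Sigma_\theta\Bigr)\tr\bigl(\Sigma_\theta^{-1}\Sigma_l\bigr)
+ \det(\Sigma_\theta)\,\frac{d}{d\theta_k}\tr\bigl(\Sigma_\theta^{-1}\Sigma_l\bigr).
\end{align*}

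For the first summand I would apply Jacobi's formula once more, $\frac{d}{d\theta_k}\det\Sigma_\theta = \det(\Sigma_\theta)\tr\bigl(\Sigma_\theta^{-1}\Sigma_k\bigr)$, producing the term $\det(\Sigma_\theta)\tr\bigl(\Sigma_\theta^{-1}\Sigma_l\bigr)\tr\bigl(\Sigma_\theta^{-1}\Sigma_k\bigr)$. For the second summand I would use linearity of the trace together with the product rule applied to the matrix product $\Sigma_\theta^{-1}\Sigma_l$, noting that $\frac{d}{d\theta_k}\Sigma_l = \frac{d^2}{d\theta_l d\theta_k}\Sigma_\theta$ by equality of mixed partials (valid since each coordinate of $\Sigma$ is twice differentiable), to get
\begin{align*}
\frac{d}{d\theta_k}\tr\bigl(\Sigma_\theta^{-1}\Sigma_l\bigr)
= \tr\Bigl(\bigl(\tfrac{d}{d\theta_k}\Sigma_\theta^{-1}\bigr)\Sigma_l\Bigr)
+ \tr\Bigl(\Sigma_\theta^{-1}\,\tfrac{d^2}{d\theta_l d\theta_k}\Sigma_\theta\Bigr).
\end{align*}
Substituting the standard identity for the derivative of a matrix inverse, $\frac{d}{d\theta_k}\Sigma_\theta^{-1} = -\Sigma_\theta^{-1}\Sigma_k\Sigma_\theta^{-1}$ (Equation~\eqref{eq:derivInvMat}), into the first trace and invoking the cyclic invariance of the trace gives $\tr\bigl((-\Sigma_\theta^{-1}\Sigma_k\Sigma_\theta^{-1})\Sigma_l\bigr) = -\tr\bigl(\Sigma_\theta^{-1}\Sigma_l\Sigma_\theta^{-1}\Sigma_k\bigr)$. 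Collecting the three contributions and factoring out $\det(\Sigma_\theta)$ then reproduces exactly the asserted formula, with the product-of-traces term, the minus cyclic-trace term, and the second-derivative trace term appearing in the order displayed in the statement.

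The argument is entirely routine: the only points that require a little care are applying the derivative-of-inverse identity correctly and using the cyclic property of the trace (together with symmetry of the second mixed partial $\frac{d^2}{d\theta_l d\theta_k}\Sigma_\theta$) to bring the terms into the ordering shown. I do not anticipate any genuine obstacle.
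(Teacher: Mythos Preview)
Your proposal is correct and follows essentially the same route as the paper: differentiate Jacobi's formula once more via the product rule, reapply Jacobi's formula to the scalar factor, and use the derivative-of-inverse identity \eqref{eq:derivInvMat} for the remaining term. The only cosmetic difference is the order in which $\theta_l$ and $\theta_k$ are differentiated (and correspondingly your use of trace cyclicity to match the displayed ordering), which is immaterial.
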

\begin{proof}
Writing the first derivative with the help of Jacobi's formula and interchanging the order of trace and differentiation, we see
\begin{align*}
&\frac{d}{d \q_l d \q_k} \det(\Sigma_{\q})
= \frac{d}{d \q_l} \tr\Big(\det(\Sigma_{\q}) \Sigma_{\q}^{-1} \frac{d}{d \q_k} \Sigma_{\q}\Big)\\
&\qquad= \tr\Big(\Big(\frac{d}{d\q_l} \det(\Sigma_{\q})\Big) \Sigma_{\q}^{-1} \frac{d}{d\q_k}\Sigma_{\q} + \det(\Sigma_{\q}) \Big(\frac{d}{d\q_l} \Sigma_{\q}^{-1}\Big) \frac{d}{d\q_k} \Sigma_{\q} 
+ \det(\Sigma_{\q}) \Sigma_{\q}^{-1} \frac{d^2}{d\q_l d\q_k} \Sigma_{\q}\Big).
\end{align*}
Next, we apply again Jacobi's formula to the first term on the right-hand side and the formula
\begin{align}\label{eq:derivInvMat}
\frac{d}{d\q_l} \Sigma_{\q}^{-1} = -\Sigma_{\q}^{-1} \biggl(\frac{d}{d\q_l} \Sigma_{\q}\biggr) \Sigma_{\q}^{-1},
\end{align}
for the derivative of the inverse of $\Sigma_{\q}$, see for instance \cite{Magnus19}, in the second term on the right-hand side.
This results in the stated formula, concluding the proof.
\end{proof}

\section{Proof of Proposition~\ref{thm:BvM direct proof}}\label{sec:proof:BvM:direct}
The main component of the proof is Lemma~\ref{lemma:J1 conv} below, which is modelled on Lemma~7.1 in \cite{Lehmann83}.

For $\tilde{\Theta}_N = \sqrt{N}(\Theta - T_N)$ the rescaled parameter set, we write the posterior density as
\begin{align*}
\pi_N(t\given X^{(N)})
&= \frac{ \pi\bigl(T_N + \frac{t}{\sqrt{N}}\bigr)\exp\Bigl( l^{(N)}\bigl(T_N + \frac{t}{\sqrt{N}}\bigr)\Bigr)}{\int_{\tilde{\Theta}_N} \pi\bigl(T_N + \frac{u}{\sqrt{N}}\bigr) \exp\Bigl( l^{(N)}\bigl(T_N + \frac{u}{\sqrt{N}}\bigr)\Bigr) du}
= \pi\Bigl(T_N + \frac{t}{\sqrt{N}}\Bigr) e^{\omega_N(t)}\frac1{C_N},
\end{align*}
where
\begin{align}\label{def: omega t}
\omega_N(t) &= l^{(N)}\Bigl(T_N + \frac{t}{\sqrt{N}} - l^{(N)}(\q_N^*)\Bigr) - \frac{1}{2N} \nabla l^{(N)}(\q_N^*)\trans V_{\q_N^*, N}^{-1}\nabla l^{(N)}(\q_N^*),\\
C_N &= \int\limits_{\tilde{\Theta}_N} \pi\Bigl(T_N + \frac{u}{\sqrt{N}}\Bigr) e^{\omega_N(u)} \,du.
\end{align}
Since $\int_{\RR^d} e^{-\frac{1}{2}t\trans V_*t}\, dt = (2\pi)^{d/2} \mathrm{det}(V_*)^{-1/2}$ and the sets $\tilde{\Theta}_N $ grow to the full space by
Assumption~\ref{assum: convergence thetan}, Lemma~\ref{lemma:J1 conv} implies
\begin{align}\label{eq:C_N-convergence}
C_N \stackrel{P_{0}^{(N)}}{\to}\pi(\q^*) (2\pi)^{d/2} \mathrm{det}(V_*)^{-1/2} > 0.
\end{align}
It follows that the sequence $C_N^{-1}$ is bounded in probability.

We can rewrite Equation~\eqref{eq:BvM-convergence} as $C_N^{-1}$ times
\begin{align*}
&\int\limits_{\tilde{\Theta}_N} \biggl| \pi\Big(T_N + \frac{t}{\sqrt{N}}\Big)e^{\omega_N(t)} 
- C_N (2\pi)^{-d/2} \mathrm{det}(V_*)^{1/2} e^{-\frac{1}{2}t\trans V_* t}\biggr|\, dt\\
&\qquad \leq \int_{\tilde{\Theta}_N} \biggl| \pi(T_N + \frac{t}{\sqrt{N}})e^{\omega_N(t)} - e^{-\frac{1}{2}t\trans V_*t}\pi(\q^*) \biggr|\, dt\\
&\qquad\qquad\qquad\qquad+ \bigl| C_N (2\pi)^{-d/2} \mathrm{det}(V_*)^{1/2} - \pi(\q^*)\bigr| \int\limits_{\tilde{\Theta}_N} e^{-\frac{1}{2}t\trans V_*t} \, dt,
\end{align*}
by the triangle inequality.
Both terms on the right converge to zero in probability, as follows from Lemma~\ref{lemma:J1 conv} and Equation~\eqref{eq:C_N-convergence}, respectively.
This concludes the proof of Equation~\eqref{eq:BvM-convergence}.

For the proof of Equation~\eqref{eq:BvM-convergence exp}, it is enough to show that the preceding line of argument goes through with an added factor $(1+|t|^k)$ in the integrands.
This follows by an appropriately adapted version of Lemma~\ref{lemma:J1 conv}.

\begin{lemma}\label{lemma:J1 conv}
Under the assumptions of Proposition~\ref{thm:BvM direct proof}, as $N \to \infty$,
\begin{align*}
\int\limits_{\tilde{\Theta}_N} \biggl| e^{\omega_N(t)} \pi(T_N + \frac{t}{\sqrt{N}}) - e^{-\frac{1}{2}t\trans V_*t}\pi(\q^*) \biggr|\, dt \stackrel{P_{0}^{(N)}}{\to} 0.
\end{align*}
\end{lemma}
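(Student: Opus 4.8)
The plan is to follow the local-expansion route behind Lemma~7.1 of \cite{Lehmann83}: first establish one exact algebraic identity for $\omega_N$, then split the rescaled parameter set $\tilde{\Theta}_N$ into a compact part, a moderate zone, and a far zone, treating the remainder term separately in each. Write $\Delta_N:=N^{-1/2}\nabla l^{(N)}(\theta_N^*)$ and $V_N:=V_{\theta_N^*,N}$, so that $T_N=\theta_N^*+N^{-1/2}V_N^{-1}\Delta_N$ and, for $\theta=T_N+t/\sqrt{N}$, one has $\sqrt{N}(\theta-\theta_N^*)=V_N^{-1}\Delta_N+t=:s_N(t)$. Substituting this $\theta$ into the quadratic expansion~\eqref{eq:definition Rn} defining $R_N$ and subtracting $l^{(N)}(\theta_N^*)+\tfrac12\Delta_N\trans V_N^{-1}\Delta_N$ as in the definition of $\omega_N$, all terms linear in $\Delta_N$ cancel and one is left with
\[
\omega_N(t)=-\tfrac12\, t\trans V_N\, t-\tfrac1{2N}\,s_N(t)\trans R_N\bigl(T_N+t/\sqrt{N}\bigr)\,s_N(t).
\]
Thus the whole lemma reduces to showing that the second (remainder) term is negligible in the relevant regions and cannot spoil the Gaussian decay in $t$.

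On a ball $\{|t|\le M\}$ the argument is pointwise convergence plus domination. For fixed $t$: Assumption~\ref{assum: regular likel} gives $\Delta_N=O_P(1)$, Assumption~\ref{assum: posit def V} gives $V_N\to V_{\theta^*}$, hence $T_N+t/\sqrt{N}\to\theta^*$ and $s_N(t)=O_P(1)$, and for $N$ large this point lies in the $\delta$-neighbourhood of $\theta_N^*$ from Assumption~\ref{assum:Rn conv} for any prescribed $\epsilon$, so $N^{-1}\|R_N\|\to0$ in probability and the remainder vanishes; with continuity of $\pi$ at $\theta^*$ (Assumption~\ref{assum:prior cont}), the integrand tends to $0$ in $P_{0}^{(N)}$-probability, while on one high-probability event (on which $\|\Delta_N\|\le K$ and $N^{-1}\sup_{\|\theta-\theta_N^*\|\le\delta}\|R_N\|\le\epsilon$) it is bounded on $\{|t|\le M\}$ by a constant times $\sup_{\|\theta-\theta^*\|\le r}\pi(\theta)<\infty$. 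A subsequence/dominated-convergence argument then yields $\int_{|t|\le M}|\,\cdot\,|\,dt\to0$ in probability for each fixed $M$.

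For the tails, decompose $\tilde{\Theta}_N\cap\{|t|>M\}$ into the moderate zone $M<|t|\le\delta\sqrt{N}$ and the far zone $|t|>\delta\sqrt{N}$ (equivalently $\|\theta-T_N\|>\delta$). In the moderate zone, choosing $\epsilon$ in Assumption~\ref{assum:Rn conv} below $\tfrac14\lambda_{\min}(V_{\theta^*})$ and $\delta$ its associated radius, the identity above together with $|s_N(t)|^2\le 2|V_N^{-1}\Delta_N|^2+2|t|^2$ and $\lambda_{\min}(V_N)\ge\tfrac12\lambda_{\min}(V_{\theta^*})$ (for $N$ large) gives, on a high-probability event, $\omega_N(t)\le-c|t|^2+C_N$ with a deterministic $c>0$ and $C_N=O_P(1)$; hence the integrand is dominated by $O_P(1)\,e^{-c|t|^2}$, whose integral over $\{|t|>M\}$ is made small for $M$ large. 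In the far zone, Assumption~\ref{assum:likel max} yields $l^{(N)}(\theta)-l^{(N)}(\theta_N^*)\le-N\epsilon_0$ uniformly with probability tending to $1$, so $\omega_N(t)\le-N\epsilon_0$ (the subtracted $\tfrac12\Delta_N\trans V_N^{-1}\Delta_N$ being nonnegative), and the change of variables $\theta=T_N+t/\sqrt{N}$ bounds the far-zone integral of $e^{\omega_N(t)}\pi(T_N+t/\sqrt{N})$ by $e^{-N\epsilon_0}N^{d/2}\int_\Theta\pi\to0$; finally $\int_{|t|>M}e^{-t\trans V_{\theta^*}t/2}\pi(\theta^*)\,dt\to0$ as $M\to\infty$.

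Assembling: given $\eta>0$, fix $M$ so large that the Gaussian tail and the moderate-zone contribution (with its $O_P(1)$ prefactor) are below $\eta$ with high probability, and note the far-zone term and the compact part are $o_P(1)$; this proves the displayed convergence. The weighted version needed for~\eqref{eq:BvM-convergence exp} follows by inserting a factor $1+|t|$ throughout: it is harmless on compacts, keeps $e^{-c|t|^2}$ and the Gaussian density integrable in the tails, and in the far zone only replaces $N^{d/2}$ by a polynomial-in-$N$ factor times $\int_\Theta(1+\|\theta\|)\pi<\infty$, finite by Assumption~\ref{assum:prior exp}. I expect the moderate zone to be the only real obstacle: the constants must be chosen in the right order --- $\epsilon$ from Assumption~\ref{assum:Rn conv} small against $\lambda_{\min}(V_{\theta^*})$, then its radius $\delta$, then $M$ --- so that $-\tfrac12 t\trans V_N t$ strictly dominates $\tfrac1{2N}s_N(t)\trans R_N s_N(t)$ \emph{uniformly} over $M<|t|\le\delta\sqrt{N}$, while the random $O_P(1)$ factors generated by $\Delta_N$ stay separated from the deterministic decay in $t$; everything else is routine bookkeeping.
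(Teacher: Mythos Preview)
Your proposal is correct and follows essentially the same route as the paper: the same algebraic identity for $\omega_N(t)$ (the paper writes $Z_N$ for your $V_N^{-1}\Delta_N$, so $s_N(t)=t+Z_N$), the same three-zone split $\{\|t\|\le M\}$, $\{M<\|t\|<\eta\sqrt N\}$, $\{\|t\|\ge\eta\sqrt N\}$, and the same mechanism in each zone (uniform smallness of $N^{-1}R_N$ on compacts, a quadratic lower bound $-c\|t\|^2+O_P(1)$ in the moderate zone via Assumption~\ref{assum:Rn conv} with $\epsilon<\tfrac14\lambda_{\min}(V_{\theta^*})$, and Assumption~\ref{assum:likel max} plus the change of variables in the far zone). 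The only cosmetic difference is that on the compact ball the paper argues via uniform convergence of the integrand rather than pointwise convergence plus domination, which is slightly cleaner but equivalent here.
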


\begin{proof}
In view of the definitions of $R_N(\q)$ and $\omega_N(t)$, given in Equations~\eqref{eq:definition Rn} and \eqref{def: omega t}, 
\begin{align}\label{eq:w(t) rewrite}
\omega_N(t)
&= -\frac{1}{2}t\trans V_{\q_N^*, N}t - \frac{1}{2N} (t + Z_N)\trans R_N\Bigl(T_N + \frac{t}{\sqrt{N}}\Bigr) (t + Z_N),
\end{align}
where $Z_N := N^{-1/2}V_{\q_N^*,N}^{-1}\nabla l^{(N)}(\q_N^*)$.
By Assumptions~\ref{assum: posit def V} and~\ref{assum: regular likel}, 
the sequence $Z_N$ is bounded in probability.
Definition~\eqref{def:Tn} of $T_N$ gives that $T_N=\q_N^*+N^{-1/2}Z_N$ and hence
$T_N\to\q^*$ in probability, by Assumption~\ref{assum: convergence thetan}.
We now partition $\tilde{\Theta}_N$ into three subsets: for a fixed $M$ and $\eta$, we consider the sets of
$t\in\tilde\Theta_N$ with $\|t\| \leq M$, $M < \|t\| < \eta \sqrt{N}$ and $\|t\| \geq \eta \sqrt{N}$.

Because the volume of the ball with radius $M$ is finite, for the range $\|t\| \leq M$, it suffices to show that
\begin{equation}\label{eq:case1:Lemma:poster}
\sup_{\|t\| \leq M} \Bigl| e^{\omega_N(t)} \pi\Bigl(T_N + \frac{t}{\sqrt{N}}\Bigr) - e^{-\frac{1}{2}t\trans V_* t}\pi(\q_N^*) \Bigr|=o_P(1).
\end{equation}
In view of Assumption~\ref{assum:Rn conv}, we have that $\sup_{\|t\| \leq M} \bigl\| N^{-1} R_N\bigl(T_N + t/\sqrt{N}\bigr) \bigr\|\to0$ in probability.
It follows that the supremum over $\|t\|\le M$ of the second term on the right of Equation~\eqref{eq:w(t) rewrite} tends to zero in probability.
Combined with Assumption~\ref{assum: posit def V} this gives that $\sup_{\|t\| \leq M} |\omega_N(t)+\frac{1}{2}t\trans V_*\to 0$.
Since also $\sup_{\|t\| \leq M} |\pi(T_N + t/\sqrt{N}) - \pi(\q^*)| \to 0$ by Assumption~\ref{assum:prior cont}, 
it follows that Equation~\eqref{eq:case1:Lemma:poster} holds.
This is true for every fixed $M$.

Next, we deal with the range $M < \|t\| < \eta \sqrt{N}$.
The function defined by $t\mapsto \exp(-\frac{1}{2}t\trans V_*t)$ is integrable and can be made arbitrarily small by choosing large enough $M$.
We shall show that 
\begin{align}\label{eq:ref}
\int\limits_{M < \|t\| < \eta \sqrt{N}} \pi\Bigl(T_N +\frac t{\sqrt{N}}\Bigr)e^{\omega_N(t)}\,dt
\end{align}
can be made arbitrarily small by choosing sufficiently large $M$ and sufficiently small $\eta>0$.
On the range of the integral, we have $\|T_N + t/\sqrt{N} - \q^*\| < 2\eta$, with probability tending to one.
Therefore, by Assumption~\ref{assum:prior cont}, we have for sufficiently small $\eta>0$ that $\pi(T_N + t/\sqrt{N}) $ is bounded in probability.
By Assumption~\ref{assum:Rn conv} we can choose $\eta > 0$ still smaller, if necessary, so that
$\sup_{\|t\|<\eta\sqrt{N}} \bigl \|N^{-1} R_N(T_N + t/\sqrt{N}) \bigr\| < \tfrac{1}{4} \lambda_{\min}(V_*)$, with probability tending to one, where $\lambda_{\min}(V_*)>0$ denotes the smallest eigenvalue of $V_*$.
Assumption~\ref{assum: posit def V} gives that $\|V_{\q_N^*, N} - V_*\| < \frac{1}{4}\lambda_{\min}(V_*)$, for large enough $N$.
Then, in view of Equation~\eqref{eq:w(t) rewrite},
\begin{align*}
\omega_N(t)\le -\frac{1}{2}\lambda_{\min}(V_*)& \|t\|^2 + \frac{1}{2}\|V_{\q_N^*, N} - V_*\|\|t\|^2 + \frac{1}{4}\lambda_{\min}(V_*) \bigl(\|t\|^2 + \|Z_N\|^2\bigr)\\
&\leq -\frac{1}{8}\lambda_{\min}(V_*) \|t\|^2 + \frac{1}{4}\lambda_{\min}(V_*) \|Z_N\|^2,
\end{align*}
with probability tending to one.
The second term is bounded in probability, while the first term is quadratically decreasing, and hence dominates the expression.
We conclude that the expression in Equation~\eqref{eq:ref} can be made arbitrarily small by choosing sufficiently large $M$, for $\eta$ sufficiently small so that the preceding estimates hold.

Finally, we consider the range $\|t\| > \eta \sqrt{N}$.
As before, the contribution of the term involving $\exp(-\frac{1}{2}t\trans V_* t)$ tends to zero, and hence we need only deal with the term $ \pi(T_N +t/\sqrt{N})e^{\omega_N(t)}$.
In view of Assumption~\ref{assum:likel max}, there exists an $\epsilon > 0$ such that $\sup_{\|\q - \q_N^*\| > \eta} l^{(N)}(\q) - l^{(N)}(\q_N^*) \leq -\epsilon N$ with probability going to one.
On this event,
\begin{align*}
&\int\limits_{\|t\| > \eta \sqrt{N}} \Bigl| e^{\omega_N(t)} \pi\Bigl(T_N + \frac{t}{\sqrt{N}}\Bigr)\Bigr|\, dt \\
&\qquad\qquad \leq N^{d/2}e^{-N\epsilon - \frac{1}{2N}\nabla l^{(N)}(\q_N^*)\trans V_{\q_N^*, N}^{-1}\nabla l^{(N)} (\q_N^*)}\int\limits_{\|\q - T_N\| \geq \eta} \pi(\q) \,d\q.
\end{align*}
This tends to zero in probability.
This finishes the proof of the lemma.
\end{proof}

%
%

\begin{acks}[Acknowledgments]
We would like to thank Harry van Zanten for proposing the problem and the fruitful discussions in the beginning of the project.
Furthermore, we would also like to thank Elena Sellentin for discussing novel statistical problems in astronomy and helping us in selecting relevant examples.
\end{acks}
\begin{funding}
Co-funded by the European Union (ERC, BigBayesUQ, project number: 101041064).
Views and opinions expressed are however those of the author(s) only and do not necessarily reflect those of the European Union or the European Research Council.
Neither the European Union nor the granting authority can be held responsible for them.

The research leading to these results 
is partly financed by a NWO Spinoza prize by the Netherlands Organisation for Scientific Research (NWO).
\end{funding}

\begin{supplement}

\subsection{Proof of assertion \eqref{eq:sig square int op}}\label{sec:proof:square:int:var}
Since $f$ is a Brownian motion,
\begin{align*}
&\expt \langle \tau_{z,\q}(f), e_{i}^z \rangle \langle \tau_{z,\q} (f), e_{j}^z\rangle 
= \int_0^z \int_0^z \int_0^t \int_0^s \expt (f_v - \q)^2 (f_r - \q)^2\, dr \,dv\, e_i^z(t) e_j^z(s)\, ds\, dt.
\end{align*}
Elementary computations using the definition of Brownian motion give, that $\expt f_v^2 f_r^2= 3r^2 + (v-r)r$, 
for $v \geq r$, and $\expt f_r f_v^2=0=\expt f_r f_v^2$, and hence
\begin{align*}
\expt (f_v-\q)^2 (f_r-\q)^2
&= 3r^2 + (v-r)r + \q^2 v + 5\q^2 r  + \q^4.
\end{align*}
Substituting this in the right side of the preceding display, we find that this is equal to 
\begin{align}\label{eq:cross:prod}
&\int_0^z \int_0^z \int_0^t \int_0^s \Bigl[3 (r \wedge v)^2 + \big((r \vee v) - (r \wedge v)\big)(r \wedge v) \\
&\qquad \qquad\qquad+ \big(5(r \wedge v) + (r \vee v)\big)\q^2 + \q^4\Bigr]\, dr \,dv\ e_i^z(t) e_j^z(s)\,ds\,dt.\nonumber
\end{align}
The double inner integral, on $(r,v)$,  can be explicitly calculated to be
\begin{align}\label{eq:help:integrand}
&\frac{2}{3}(s \vee t) (s \wedge t)^3 -\frac{1}{3}(s \wedge t)^4+\frac{1}{4}(s \wedge t)^2 (s \vee t)^2 \\
&\qquad\qquad\qquad+ \q^2\Big( \frac{1}{2}(s \wedge t) (s \vee t)^2 + \frac{5}{2} (s \vee t) (s \wedge t)^2 - \frac{2}{3}(s \wedge t)^3 \Big)+\q^4st.\nonumber
\end{align}
Next, for general $k, \ell, m, n \in \NN$,
\begin{align*}
\int_0^z \int_0^z (s \wedge t)^k (s \vee t)^{\ell} s^m t^n \,ds\, dt 
&= \int_0^z \int_0^t s^{k + m} t^{\ell + n}\, ds\, dt + \int_0^z \int_{t}^z t^{k+n} s^{\ell + m} \,ds \,dt\\
&= \frac{(2+2k + m + n)z^{2+k+l+m+n}}{(1+k+m)(1+k+n)(2+k+l+m+n)}.
\end{align*}
Recalling the definitions of Legendre polynomials in Equation~\eqref{def:Legendre}, this allows to compute all integrals 
in Equation~\eqref{eq:cross:prod}, where the double inner integral can be rewritten in the form Equation~\eqref{eq:help:integrand}.
The coefficient of the $\q^4$ term is
\begin{align*}
\int_0^z \int_0^z st \,e_i^z(t) e_j^z(s)\,ds\, dt 
= \sum_{\ell = 0}^i \sum_{k = 0}^j a_{z,i,\ell} a_{z,j,k} \frac{z^{\ell + k + 4}}{(\ell+2)(k + 2)}. 
\end{align*}
Similarly, the coefficient of the $\q^2$-term can be seen to be equal to 
\begin{align*}
\sum_{\ell = 0}^i \sum_{k = 0}^j a_{z, i, \ell} a_{z, j, k} \biggl\{ \frac{1}{2}\frac{(4 + \ell + k) z^{5 + k + \ell}}{(2+\ell)(2+k)(5+\ell+k)}+ &\frac{5}{2} \frac{(6+\ell+k)z^{5+\ell+k}}{(3 + \ell)(3 + k)(5 + \ell + k)}\\
&\quad- \frac{2}{3} \frac{(8 + \ell + k) z^{5 + \ell + k}}{(4 + \ell)(4 + k)(5 + \ell + k)}\biggr\}.
\end{align*}
Finally, the constant term takes the form
\begin{align*}
\begin{aligned}
\sum_{\ell = 0}^i \sum_{k = 0}^j a_{z,i,\ell} a_{z,j,k} \biggl\{\frac{2}{3} \frac{(8 + \ell + k) z^{6 + \ell + k}}{(4 + \ell)(4 + k)(6 + \ell + k)} - \frac{1}{3} \frac{(10 + \ell + k) z^{6 + \ell + k}}{(5 + \ell)(5+ k)(6 + \ell + k)} 
+ \frac{1}{4} \frac{z^{6 + \ell + k}}{(3 + \ell)(3 + k)}\biggr\}.
\end{aligned}
\end{align*}
By similar computations we find that 
\begin{align}
\label{EqMeanQuadratic}
\expt \langle \tau_{z,\q}(f), e_i^z \rangle
= \sum_{\ell = 0}^i a_{z, i, \ell} \biggl\{\frac{1}{2}\frac{1}{\ell + 3}z^{\ell + 3} + \q^2 \frac{1}{\ell + 2}z^{\ell + 2}\biggr\}.
\end{align}
Combining the above displays, we see after simplifying the expressions, that the pseudo covariance
$\Cov(\langle \tau_{z,\q}(f), e_{i}^z \rangle ,\langle \tau_{z,\q} (f), e_{j}^z\rangle)$ is equal to 
\newcommand\smpl{\mkern-1.45mu + \mkern-1.45mu}
\begin{align}
&\sum_{\ell = 0}^i  \sum_{k = 0}^j a_{1, i, l} a_{1, j, k}\Bigl(  z^{5}\Bigl[\frac{2(8 \smpl \ell \smpl k) }{3(4 \smpl \ell)(4 \smpl k)(6 \smpl \ell \smpl k)} \! - \! \frac{10 \smpl \ell \smpl k}{3(5 \smpl \ell)(5+ k)(6 \smpl \ell \smpl k)} \! \Bigr]\label{EqDefbc}\\
&\qquad+\q^2 z^{4}\Bigl[\frac{2(6+k+\ell)}{(3+k)(3+\ell)(5+k+\ell)} - \frac{2(8+k+\ell)}{3(4+k)(4+\ell)(5+k+\ell)} \Bigr]\Bigr).\nonumber
\end{align}
This concludes the proof of the claim.

\subsection{Sandwich variance}
\label{SectionSandwich}
The covariance matrix of the misspecified posterior mean is given by the sandwich formula $V_*^{-1}J_{\q_0} V_*^{-1}$, where $V_*$ is the
covariance matrix of the misspecified posterior distribution and $J_{\q_0}$ 
is the covariance matrix of the normal limiting distribution of the sequence $N^{-1/2}\nabla l^{(N)}(\theta_N^*)$ 
in Assumption~\ref{assum: regular likel}. For our hierarchical setting the matrix $V_*=V_{\q_0}$ is given in Lemma~\ref{thm:locMinKLDiv}.
In the i.i.d.\ case for a one-dimensional parameter, the number $J_\q$ can be computed as follows. 

The score function of the misspecified normal  density  is equal to 
$$\frac{\partial}{\partial \q}\Bigl(-\frac12\log \det \Sigma_\q -\frac 12(X-\m_\q)^T\Sigma_\q^{-1}(X-\m_\q)\Bigr).$$
The number $J_\q$ is the variance of this variable for $X\sim X_i$ following the true distribution \eqref{eq: model},
i.e.\ $X\sim T_\q(f)+\e$, for independent variables $f\sim G$ and $\e\sim N(0,\Lambda)$. For $\Lambda=I$ this readily gives
\begin{align*}
J_\q
&=\var\Bigl(\dot \m_\q^T\Sigma_\q^{-1}(X-\m_\q)+\frac 12 (X-\m_\q)^T\Sigma_\q^{-1}\dot\Sigma_\q\Sigma_\q^{-1}(X-\m_\q)\Bigr)\\
&=\var_f\Bigl(\dot\m_\q^T\Sigma_\q^{-1}\bigl(T_\q(f)-\m_\q\bigr)+\bigl(T_\q(f)-\m_\q\bigr)^TD_\q\bigl(T_\q(f)-\m_\q\bigr)\Bigr)\\
&\qquad\qquad+4\E_f \bigl\|D_\q\bigl(T_\q(f)-\m_\q\bigr)\bigr\|^2+\|\Sigma_\q^{-1}\dot\m_\q\bigr\|^2+2 \tr(D_\q^2),
\end{align*}
where $D_\q=\frac12 \Sigma_\q^{-1}\dot\Sigma_\q\Sigma_\q^{-1}$. The remaining variance and expectation are computed
under $f\sim G$.
\end{supplement}


\bibliographystyle{imsart-nameyear} 
\bibliography{bibliography}       


\end{document}